\newtheorem{Theo}{Theorem}[section]
\newtheorem{Prop}[Theo]{Proposition}
\newtheorem{Cor}[Theo]{Corollary}
\newtheorem{Lemma}[Theo]{Lemma}
\theoremstyle{definition}
\newtheorem{Exam}[Theo]{Example}
\newtheorem{Remark}[Theo]{Remark}
\def\mystrut(#1,#2){\vrule height #1pt depth #2pt width 0pt}
\newcommand{\Hom}{{\rm Hom}}
\newcommand{\Ext}{{\rm Ext}}
\newcommand{\mmod}{{\rm mod}\,}
\newcommand{\Z}{\mathbb{Z}}
\begin{document}

\title[Idempotent reduction for the finitistic dimension conjecture]{Idempotent reduction for the finitistic dimension conjecture}

\author{Diego Bravo}
\address{Instituto de Matem\'atica y Estad\'istica "Rafael Laguardia", Universidad de la Rep\'ublica,
Julio Herrera y Reissig 565, Montevideo, Uruguay}
\email{dbravo@fing.edu.uy}
\author{Charles Paquette}\address{Department of Mathematics and Computer Science, Royal Military College of Canada,
Kingston, ON K7K 7B4, Canada}
\email{charles.paquette.math@gmail.com}

\subjclass[2010]{16E10, 16G20}
\keywords{finitistic dimension, primitive idempotent, reduction technique, projective dimension, projective ideal}

\thanks{ The second author was supported by the Natural Sciences and Engineering Research Council of Canada and by CDARP. Both authors would like to thank an anonymous referee for his comments, which led to improved upper bounds.}

\begin{abstract}In this note, we prove that if $\Lambda$ is an Artin algebra with a simple module $S$ of finite projective dimension, then the finiteness of the finitistic dimension of $\Lambda$ implies that of $(1-e)\Lambda(1-e)$ where $e$ is the primitive idempotent supporting $S$. We derive some consequences of this. In particular, we recover a result of Green-Solberg-Psaroudakis: if $\Lambda$ is the quotient of a path algebra by an admissible ideal $I$ whose defining relations do not involve a certain arrow $\alpha$, then the finitistic dimension of $\Lambda$ is finite if and only if the finitistic dimension of $\Lambda/\Lambda\alpha \Lambda$ is finite.
\end{abstract}

\maketitle

\section{Introduction}

In this paper, $\Lambda$ stands for an Artin algebra, that we will assume to be basic. Since we are mainly interested in homological properties of the finitely generated modules over $\Lambda$, this is not a restriction. We denote by $\mmod \Lambda$ the category of finitely generated left $\Lambda$-module. All modules considered are left modules, unless otherwise stated. We let $e$ denote an idempotent of $\Lambda$ and $S_e$ be the semi-simple module supported at $e$. If ${\rm rad} \Lambda$ is the Jacobson radical of $\Lambda$, then $S_e \cong \Lambda e/{\rm rad} \Lambda e$. Note that $S_e$ is simple if and only if $e$ is primitive. We will consider the Artin algebra $(1-e)\Lambda(1-e)$, which will be denoted by $\Gamma$. In other words, $\Gamma$ is the endomorphism algebra over $\Lambda$ of the projective module $\Lambda (1-e)$.

It is desirable to relate the homological properties of $\Lambda$ and $\Gamma$. This has been achieved in \cite{APT, IP, IP2}, for instance. However, one has to impose some conditions on $e$ as in general, the Artin algebras $\Lambda$ and $\Gamma$ can be very far apart, from a homological perspective.  In \cite{IP}, it was shown that when $e$ is primitive and $\Lambda$ is finite dimensional over an algebraically closed field, the finiteness of the global dimensions of $\Lambda$ and $\Gamma$ are equivalent, when all higher self-extension groups of $S_e$ vanish. Moreover, the latter condition happens to be necessary for both $\Lambda$ and $\Gamma$ having finite global dimension.

We let
$${\rm findim}\,\Lambda = {\rm sup\,}\{{\rm pd}_\Lambda M \mid M \in {\rm mod}\Lambda, \, {\rm pd}_\Lambda M < \infty\}$$
denote the (little) \emph{finitistic dimension} of $\Lambda$, that is, the {\sl supremum} of the projective dimensions of those finitely generated $\Lambda$-modules having finite projective dimension. It has been conjectured (and publicized by Bass) in the sixties, see \cite{Ba}, that the finitistic dimension of an Artin algebra is always finite. This is a very important problem in representation theory of algebras, as a positive answer to it implies the validity of many other important homological conjectures, including the Gorenstein symmetry conjecture, the Nakayama conjecture, the generalized Nakayama conjecture, Nunke's condition, the Auslander-Reiten conjecture and the vanishing conjecture; see \cite{Happel}. In this note, we are mainly interested in comparing the finitistic dimensions of closely related algebras such as $\Lambda$ and $\Gamma$. It is worth noting that comparing the finitistic dimensions of such algebras is not new. It has been done, for instance, by Fuller and Saorin in \cite{FS}, and by Xi in \cite{CC}. In this paper, we will give a particular attention to the case where $e$ is primitive. Our first main result is the following.

\begin{Theo} Let $e$ be a primitive idempotent such that ${\rm pd}_\Lambda S_e$ is finite. Then ${\rm findim}\, \Gamma \le 2\,{\rm findim}\,\Lambda - \ell$ for some $1 \le \ell \le {\rm pd}_\Lambda S_e$.
\end{Theo}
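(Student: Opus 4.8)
The plan is to relate $\mmod \Gamma$ to $\mmod \Lambda$ via the standard pair of functors: the Schur functor $(1-e)(-) \colon \mmod \Lambda \to \mmod \Gamma$ (which sends $M$ to $(1-e)M$, or equivalently $\Hom_\Lambda(\Lambda(1-e), M)$), and its left adjoint $\Lambda(1-e) \otimes_\Gamma (-) \colon \mmod \Gamma \to \mmod \Lambda$. First I would take an arbitrary $\Gamma$-module $N$ with $d := \pd_\Gamma N < \infty$ and produce a $\Lambda$-module whose projective dimension controls $d$ from above, so that a finite bound on $\findim \Lambda$ forces a finite bound on $d$. The natural candidate is $M := \Lambda(1-e) \otimes_\Gamma N$: since $\Lambda(1-e)$ is projective as a right $\Gamma$-module (it is $\End_\Lambda(\Lambda(1-e))$ acting on itself... more precisely $(1-e)\Lambda(1-e)$-projective), the tensor functor is exact, so a finite projective resolution of $N$ over $\Gamma$ gives a finite complex of projective $\Lambda$-modules with homology concentrated in the right spots; one must check $\Tor$-vanishing to conclude $\pd_\Lambda M \le d$ (or $d$ plus a small correction). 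Crucially $(1-e)M = (1-e)\Lambda(1-e) \otimes_\Gamma N = \Gamma \otimes_\Gamma N = N$, so we recover $N$ by applying the Schur functor.

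**Passing the bound back down.** The issue is that knowing $\pd_\Lambda M$ is finite and bounded does not immediately bound $\pd_\Gamma N = \pd_\Gamma (1-e)M$, because applying $(1-e)(-)$ to a projective resolution of $M$ over $\Lambda$ yields a complex of $\Gamma$-modules $(1-e)P_\bullet$ whose terms are of the form $(1-e)\Lambda f$ for primitive idempotents $f$; when $f \ne e$ these are projective over $\Gamma$, but the summands $(1-e)\Lambda e$ are generally \emph{not} projective $\Gamma$-modules. So the core of the argument is to control $\pd_\Gamma (1-e)\Lambda e$. This is exactly where the hypothesis $\pd_\Lambda S_e < \infty$ enters: I would take a minimal projective resolution of $S_e$ over $\Lambda$, of length $m := \pd_\Lambda S_e$, apply $(1-e)(-)$ to get an exact sequence of $\Gamma$-modules (exactness is preserved since $(1-e)(-)$ is exact), with all terms projective over $\Gamma$ except possibly the term coming from the projective cover $\Lambda e$ of $S_e$, which contributes $(1-e)\Lambda e$. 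Bootstrapping, one deduces $\pd_\Gamma (1-e)\Lambda e \le m - 1$, or with more care a bound involving an integer $\ell$ in the range $1 \le \ell \le m$ reflecting how far into the resolution the first genuinely nonprojective syzygy appears. This $\ell$ is precisely the source of the parameter in the statement.

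**Assembling the final bound.** Combining the two halves: start with $N$, $\pd_\Gamma N = d < \infty$; form $M = \Lambda(1-e)\otimes_\Gamma N$ with $\pd_\Lambda M \le d$ (finite, hence $\le \findim \Lambda$); take a projective resolution $P_\bullet \to M$ over $\Lambda$ of length $\le \findim \Lambda$; apply $(1-e)(-)$ to recover $N$ together with a resolution by $\Gamma$-modules each of which is either projective or a sum of copies of $(1-e)\Lambda e$, hence of $\Gamma$-projective dimension $\le m - \ell'$ for the relevant $\ell'$. A mapping-cone / horseshoe style splicing then gives $d = \pd_\Gamma N \le \findim \Lambda + (\text{bound on } \pd_\Gamma(1-e)\Lambda e) \le \findim\Lambda + (\findim\Lambda - \ell) = 2\,\findim\Lambda - \ell$, after feeding in that $\pd_\Lambda M \le \findim \Lambda$ and that the nonprojective pieces were themselves built from a resolution of length $\le \findim \Lambda$ shifted by the $S_e$-data. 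Taking the supremum over all such $N$ yields $\findim \Gamma \le 2\,\findim\Lambda - \ell$.

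**Main obstacle.** The delicate point is the exact bookkeeping in the last step: one needs the \emph{same} integer $\ell$ to work uniformly for all $N$, and one must verify that the non-projective summands $(1-e)\Lambda e$ do not appear so late in the resolution of $M$ that their contribution exceeds $\findim \Lambda - \ell$. Handling this cleanly likely requires replacing the crude ``projective resolution'' estimate with a sharper statement: e.g., that the truncated syzygy $\Omega^{\findim\Lambda}_\Lambda M$ is projective, so that applying $(1-e)$ and then resolving over $\Gamma$ only the finitely many non-projective $(1-e)\Lambda e$-summands appearing in degrees $\le \findim \Lambda$ costs at most $\pd_\Gamma (1-e)\Lambda e$ extra steps — and then showing $\pd_\Gamma(1-e)\Lambda e \le \findim\Lambda - \ell$ by re-running the $S_e$-resolution argument, using that $\pd_\Lambda S_e \le \findim \Lambda$ forces $\ell \le \pd_\Lambda S_e$. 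Getting the constant to be exactly $2$ rather than something larger is the part I expect to require the most care, and is presumably where the referee's suggested improvement came in.
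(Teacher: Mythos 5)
There is a genuine gap, and it sits at the very foundation of your "setting up" step. You assert that $\Lambda(1-e)$ is projective as a right $\Gamma$-module so that $G = \Lambda(1-e)\otimes_\Gamma (-)$ is exact; this is false. As a right $\Gamma$-module, $\Lambda(1-e) \cong (1-e)\Lambda(1-e) \oplus e\Lambda(1-e) = \Gamma \oplus e\Lambda(1-e)$, and the summand $e\Lambda(1-e)$ is not projective (nor flat) over $\Gamma$ in general. The paper explicitly notes that $G$ is not exact on module categories. Consequently, your proposed inequality $\mathrm{pd}_\Lambda(\Lambda(1-e)\otimes_\Gamma N) \le d$ — which you acknowledge requires ``$\Tor$-vanishing'' — is precisely the difficulty, not a small correction: the groups $\Tor^\Gamma_i(\Lambda(1-e), N)$ for $i>0$ are generally nonzero, they appear in arbitrarily high homological degree, and controlling them is the entire content of the paper's Lemma~\ref{nocohdegi}. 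That lemma works with the derived functor $\Lambda(1-e)\otimes^L_\Gamma N$ as a complex, peels off its cohomologies one at a time by exact triangles, and shows (using the uniform graded loewy length $\ell$ of the Ext-algebra $Y(e)$ and the Ext-vanishing pattern $\Ext^i_\Lambda(Z,S_e)=0$ for $i\ge\ell$, $\Ext^{\ell-1}_\Lambda(Z,S_e)\ne0$ for any $\Lambda/\Lambda(1-e)\Lambda$-module $Z$) that nonzero cohomology in degree $\le \min(-1, -r+\ell-1)$ would force a $\Lambda$-module of projective dimension $\ge r+1$, contradicting $\mathrm{findim}\,\Lambda=r$. The bound $s \le 2r-\ell$ then drops out by truncating the complex, with no second "passing the bound back down" step at all.

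Your second step also has a concrete flaw independent of the first. You claim that applying $(1-e)(-)$ to a minimal $\Lambda$-projective resolution of $S_e$ yields a sequence where ``all terms are projective over $\Gamma$ except possibly the term coming from the projective cover $\Lambda e$.'' But $\Lambda e$ occurs as a summand of the $i$-th term $P_i$ of that resolution exactly when $\Ext^i_\Lambda(S_e, S_e) \ne 0$, i.e., potentially for all $0 \le i \le \ell-1$, not just $i=0$. So the proposed bootstrap ``$\mathrm{pd}_\Gamma(1-e)\Lambda e \le m-1$'' is circular: the sequence you are using to resolve $(1-e)\Lambda e$ has copies of $(1-e)\Lambda e$ among its higher terms whenever $S_e$ has self-extensions. (It works cleanly only when $\ell = 1$.) In fact the paper never needs to bound $\mathrm{pd}_\Gamma(1-e)\Lambda e$, and does not apply $F$ to a resolution of $GN$; its argument is one-directional, entirely in the derived category, and the factor $2$ in the final bound comes from combining $-t_1 \ge r-\ell+1$ with $\mathrm{pd}_\Lambda$ of the truncated complex being $\le r$, not from a splicing of two separate length-$r$ resolutions. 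You identified the right pair of adjoint functors and the right role for $\mathrm{pd}_\Lambda S_e$, but the mechanism by which the bound is extracted is fundamentally different and your two pivotal inequalities do not hold as stated.
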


As we will see later, the condition that $e$ is primitive can be slightly relaxed. Note also that the condition that ${\rm pd}_\Lambda S_e < \infty$ is not that restrictive, due to the following result by Auslander \cite{Aus}, later generalized by Iyama \cite{Iyama}.

\begin{Prop}[Auslander] Given an Artin algebra $\Gamma$, there exists an Artin algebra $\Lambda$ with ${\rm gl.dim}\,\Lambda < \infty$ and $\Gamma = (1-e)\Lambda(1-e)$ for some idempotent $e$ of $\Lambda$.
\end{Prop}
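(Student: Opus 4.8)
The plan is to recall Auslander's construction. Let $n$ be the Loewy length of $\Gamma$, i.e. the least integer with $({\rm rad}\,\Gamma)^n=0$, and set
$$M:=\bigoplus_{i=1}^{n}\Gamma/({\rm rad}\,\Gamma)^i,$$
a finitely generated left $\Gamma$-module. The summand obtained for $i=n$ is $\Gamma$ itself, so $M$ is a generator. Put $\Lambda:=\End_\Gamma(M)^{\,{\rm op}}$, which is again an Artin algebra (and may be replaced by its basic version if one wishes, without affecting anything below). Let $1-e\in\Lambda$ be the idempotent given by the projection of $M$ onto its summand $\Gamma$. For any idempotent of $\Lambda$ cut out by a direct summand $N$ of $M$ one has, tautologically, $(1-e)\Lambda(1-e)\cong\End_\Gamma(N)^{\,{\rm op}}$; taking $N=\Gamma$ yields $(1-e)\Lambda(1-e)\cong\End_\Gamma({}_\Gamma\Gamma)^{\,{\rm op}}\cong\Gamma$. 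Thus it remains only to prove that ${\rm gl.dim}\,\Lambda<\infty$.

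For this, I would work with the functor $F:=\Hom_\Gamma(M,-)\colon\mmod\Gamma\to\mmod\Lambda$, which is left exact and restricts to an equivalence from ${\rm add}\,M$ onto the full subcategory of projective $\Lambda$-modules. The main point is to bound ${\rm pd}_\Lambda(FX)$ for all $\Gamma$-modules $X$, by induction on the Loewy length of $X$. Given $X$ with $({\rm rad}\,\Gamma)^{\ell}X=0$, choose a right ${\rm add}\,M$-approximation $p\colon M'\to X$; since $M$ is a generator, $p$ is surjective, and since $p$ is an ${\rm add}\,M$-approximation the sequence $0\to\ker p\to M'\to X\to 0$ stays exact after applying $F$, yielding a short exact sequence
$$0\longrightarrow FX'\longrightarrow FM'\longrightarrow FX\longrightarrow 0$$
with $FM'$ projective and $X'=\ker p$. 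The crux is that, for this particular $M$, the module $X'$ has Loewy length strictly smaller than that of $X$ (roughly: $M$ contains $\Gamma/({\rm rad}\,\Gamma)^{\ell}$ as a summand, so $X$ is already a quotient of a "depth-$\ell$" piece of ${\rm add}\,M$, and the kernel of a minimal such surjection lives in the radical of that piece); granting this, induction gives ${\rm pd}_\Lambda(FX)\le n-1$. Finally, every $\Lambda$-module $Y$ fits into a short exact sequence $0\to FX\to FN_0\to Y\to 0$ with $FN_0$ a projective cover and $X$ a suitable $\Gamma$-module — again using that $M$ is a generator and that $F$ is left exact — so ${\rm pd}_\Lambda Y\le 1+{\rm pd}_\Lambda(FX)\le n$, and hence ${\rm gl.dim}\,\Lambda\le n<\infty$.

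The hard part is the Loewy-length drop for the kernel of an ${\rm add}\,M$-approximation: this is exactly where the precise choice $M=\bigoplus_i\Gamma/({\rm rad}\,\Gamma)^i$ (as opposed to a smaller generator) is essential, and it requires a careful analysis of how maps out of the summands $\Gamma/({\rm rad}\,\Gamma)^i$ interact with the radical filtration of $X$. The remaining ingredients — the equivalence ${\rm add}\,M\simeq{\rm proj}\,\Lambda$, left exactness of $F$, and the behaviour of first syzygies — are formal. Full details are in \cite{Aus}, and the stronger form of the statement (with extra control on $\Lambda$) is due to Iyama \cite{Iyama}.
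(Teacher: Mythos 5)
The paper itself offers no proof of this proposition --- it merely cites Auslander \cite{Aus} and Iyama \cite{Iyama} --- so I am judging your sketch on its own terms. Your reconstruction is the classical argument: the Auslander generator $M=\bigoplus_{i=1}^{n}\Gamma/({\rm rad}\,\Gamma)^{i}$, the algebra $\Lambda=\End_\Gamma(M)^{\rm op}$, and the idempotent cut out by the summand $\Gamma$ are all set up correctly, and induction on Loewy length to bound ${\rm pd}_\Lambda(FX)$ is exactly the right strategy. The ``hard part'' you flag --- that the kernel of a (suitably chosen) right ${\rm add}\,M$-approximation of a module of Loewy length $\ell$ again has Loewy length at most $\ell-1$, and that this approximation remains exact under $F$ --- is where essentially all the content resides; it uses both that a minimal approximation of such an $X$ needs no summand $\Gamma/({\rm rad}\,\Gamma)^{j}$ with $j>\ell$ (any map from it factors through $\Gamma/({\rm rad}\,\Gamma)^{\ell}\in{\rm add}\,M$), and that the kernel of the resulting minimal epimorphism lies in ${\rm rad}\,M'$. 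Deferring these details is reasonable, but do note that this step is not ``formal'' in the way the rest is.

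There is, however, a genuine error in your closing step. You assert that every $\Lambda$-module $Y$ fits into $0\to FX\to FN_0\to Y\to 0$, i.e.\ that the \emph{first} syzygy $\Omega Y$ lies in the essential image of $F$. This fails in general: for $\Gamma=k[x]/(x^2)$ and $M=k\oplus\Gamma$, the essential image of $F$ consists exactly of the projective $\Lambda$-modules, so the radical of an indecomposable projective --- a first syzygy of a simple --- is not of the form $FX$. What the generator hypothesis together with left exactness of $F$ actually gives is that the \emph{second} syzygy is in the image of $F$: given a projective presentation $FM_1\xrightarrow{\,Fg\,}FM_0\to Y\to 0$ (writing the differential as $Fg$ using fullness of $F$ on ${\rm add}\,M$), one has $\Omega^2 Y=\ker(Fg)=F(\ker g)$. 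The conclusion is then ${\rm pd}_\Lambda Y\le 2+{\rm pd}_\Lambda F(\ker g)\le n+1$, giving ${\rm gl.dim}\,\Lambda\le n+1$ rather than $\le n$. This is still finite, so the proposition survives, but the bound in your final paragraph needs the extra $+1$.
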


In particular, in this proposition, we have ${\rm pd}_\Lambda S_e < \infty$ and ${\rm findim}(\Lambda)<\infty$. Therefore, if we could extend the above theorem to $e$ not necessarily primitive, that would yield a complete proof of the finitistic dimension conjecture. In the second part of the note, we provide applications of the above theorem.

Our first application is the following result, that was first proven in \cite{GSP} when $x$ is an arrow. Here, $k$ is a field, $Q = (Q_0, Q_1)$ a finite quiver and $I$ an admissible ideal of the path algebra $kQ$.

\begin{Prop}
Let $\Lambda = kQ/I$ be such that $I$ is generated by relations not involving a given element $x$ of $Q_0 \cup Q_1$. Let $J$ be the two sided ideal of $\Lambda$ generated by $x$. Then ${\rm findim}\,\Lambda \le 2\,{\rm findim}\,\Lambda/J +3 \le 2\,{\rm findim}\,\Lambda +3$.
\end{Prop}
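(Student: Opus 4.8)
The plan is to treat the cases $x\in Q_1$ (an arrow) and $x\in Q_0$ (a vertex) separately, combining the theorem above with one further ingredient, whose proof I expect to be the main obstacle: if $A$ is an Artin algebra and $e$ an idempotent such that $AeA$ is projective both as a left and as a right $A$-module, then
$${\rm findim}\,A\le {\rm findim}(A/AeA)+{\rm findim}(eAe)+2 \qquad\text{and}\qquad {\rm findim}(A/AeA)\le {\rm findim}\,A.$$
Under this hypothesis ${\rm pd}_A(A/AeA)\le 1$ and $AeA$ is a stratifying ideal, so there is a recollement relating the derived categories of $A$, $A/AeA$ and $eAe$; I would prove the displayed bounds by a change-of-rings and syzygy argument along this recollement, splitting off, from a module of finite projective dimension over $A$, a part that is a module over $A/AeA$ and a part governed by $eAe$. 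When $e$ is primitive, $eAe={\rm End}_A(Ae)$ is local, and a local Artin algebra has finitistic dimension $0$: if a module had finite positive projective dimension, the last differential of a minimal projective resolution would be an injection whose image lies in the radical of a free module and is therefore annihilated upon left multiplication by any nonzero element of the right socle of $A$, forcing that socle element to vanish. So for primitive $e$ the ingredient reads ${\rm findim}\,A\le {\rm findim}(A/AeA)+2$.

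The ingredient applies here because of the following observation: if no relation of $I$ involves a vertex $v$, then the two-sided ideal of $\Lambda$ spanned by the paths passing through $v$ is projective both as a left and as a right $\Lambda$-module. Indeed, grouping each such path according to its segment after the last visit to $v$ exhibits this ideal, as a left module, as a finite direct sum of copies of $\Lambda e_v$ --- right multiplication by such a terminal segment being injective on $\Lambda e_v$, since a relation straddling $v$ would have a term passing through $v$ --- and symmetrically on the right; the same holds for the ideal spanned by the paths through an arrow not involved in any relation. Hence, in the vertex case $x=e_v$ we are done immediately: $e_v$ is primitive ($\Lambda$ being basic), $J=\Lambda e_v\Lambda$ is projective on both sides, $e_v\Lambda e_v$ is local, so ${\rm findim}\,\Lambda\le {\rm findim}(\Lambda/J)+2\le 2\,{\rm findim}(\Lambda/J)+3$.

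In the arrow case $x=\alpha\colon i\to j$, I would first subdivide $\alpha$: let $\tilde Q$ be $Q$ with a new vertex $\ast$ and $\alpha$ replaced by $\alpha_1\colon i\to\ast$ and $\alpha_2\colon\ast\to j$, and put $\tilde\Lambda=k\tilde Q/\tilde I$ with $\tilde I$ generated by the same relations as $I$ (which make sense in $k\tilde Q$ and involve neither $\alpha_1$ nor $\alpha_2$); then $\tilde\Lambda$ is again a finite-dimensional algebra with admissible relations. Contracting $\alpha_1$ and $\alpha_2$ identifies $(1-e_\ast)\tilde\Lambda(1-e_\ast)$ with $\Lambda$, deleting the vertex $\ast$ identifies $\tilde\Lambda/\tilde\Lambda e_\ast\tilde\Lambda$ with $\Lambda/J$, and, since $\alpha_2$ is the unique arrow out of $\ast$ and lies in no relation, ${\rm rad}\,P_\ast$ is generated by $\alpha_2$ and the natural surjection $P_j\to{\rm rad}\,P_\ast$ is an isomorphism, so ${\rm pd}_{\tilde\Lambda}S_{e_\ast}=1$. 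Applying the theorem above to $\tilde\Lambda$ and the primitive idempotent $e_\ast$ (necessarily with $\ell=1$) gives ${\rm findim}\,\Lambda={\rm findim}\big((1-e_\ast)\tilde\Lambda(1-e_\ast)\big)\le 2\,{\rm findim}\,\tilde\Lambda-1$; on the other hand $\tilde\Lambda e_\ast\tilde\Lambda$, being the span of the paths through $\ast$, is projective on both sides and $e_\ast\tilde\Lambda e_\ast$ is local, so the ingredient applied to $\tilde\Lambda$ and $e_\ast$ gives ${\rm findim}\,\tilde\Lambda\le {\rm findim}(\tilde\Lambda/\tilde\Lambda e_\ast\tilde\Lambda)+2={\rm findim}(\Lambda/J)+2$. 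Combining the two inequalities yields ${\rm findim}\,\Lambda\le 2\,{\rm findim}(\Lambda/J)+3$. Finally, as $J$ is a projective ideal, ${\rm findim}(\Lambda/J)\le {\rm findim}\,\Lambda$, which gives the last inequality $2\,{\rm findim}(\Lambda/J)+3\le 2\,{\rm findim}\,\Lambda+3$.
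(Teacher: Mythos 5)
Your proof is correct and follows essentially the same route as the paper. In the arrow case you subdivide $\alpha$ exactly as the paper does, apply the projective-ideal reduction to the augmented algebra, and then invoke the main theorem with $\mathrm{pd}_{\tilde\Lambda}S_{e_\ast}=1$; the combined bound $2r+3$ matches the paper's. The one place where you go slightly beyond the paper is the vertex case: the paper only writes out the arrow case (implicitly leaving $x\in Q_0$ to the reader), whereas you handle it explicitly and in fact obtain the stronger bound $\mathrm{findim}\,\Lambda\le\mathrm{findim}\,\Lambda/J+2$ directly from the projective-ideal reduction, without needing the main theorem at all. A few small points worth noting: your "ingredient," restricted to primitive $e$, is precisely the paper's Proposition on projective ideals (cited from \'Agoston--Happel--Luk\'acs--Unger), and citing it would be cleaner than resketching it via recollements; the more general version with $\mathrm{findim}(eAe)$ on the right is not needed and would require an actual proof (you correctly flag it as a potential obstacle). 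Also, you only need $AeA$ projective as a \emph{left} module for the bound $\mathrm{findim}\,A\le\mathrm{findim}(A/AeA)+2$; both-sided projectivity is convenient but not required. Finally, your observation that $\ell=1$ (since there is no loop at $\ast$) is a slight sharpening over the paper's use of $\ell\ge 1$, though both yield the same final inequality.
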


Our second application is the following, where $e$ is a primitive idempotent; compare \cite[Corollary 3.2]{Wei}.

\begin{Prop}Let $\Lambda = kQ/I$ as above. Let $J$ be a submodule of $\Lambda e$ with $J {\rm rad} \Lambda=0$. Thus, $J$ is a two-sided ideal.
Assume further that ${\rm pd}_{\Lambda/J}J < \infty$. Then ${\rm findim}\,\Lambda \le 2\,{\rm findim}\,\Lambda/J + 3 $.
\end{Prop}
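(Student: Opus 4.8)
The plan is to deduce the statement from the first Theorem, applied just once, by realizing $\Lambda$ as $(1-e')\Lambda'(1-e')$ for a one–point extension $\Lambda'$ of $\Lambda$ in such a way that, simultaneously, $\Lambda'/\Lambda'e'\Lambda'\cong\Lambda/J$; the finiteness of ${\rm pd}_{\Lambda}S$ will be replaced by the finiteness of ${\rm pd}_{\Lambda/J}J$, which is exactly what makes ${\rm pd}_{\Lambda'}S_{e'}$ finite. This mirrors the mechanism behind the preceding Proposition, the point being that here the radical of the new indecomposable projective is $J$ itself rather than a projective module. We may assume $J\subseteq{\rm rad}\,\Lambda$ (if $J=\Lambda e$ the assertion is trivial, as then $\Lambda/J$ has the same finitistic dimension up to a constant and $J$ is even projective), so that $J^2=0$ and $J$ is a $\Lambda/J$-module; note also that $J$ is finite dimensional.

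For the construction, write $e=e_v$ for the primitive idempotent supporting $S$, fix a projective presentation $\bigoplus_{j}\Lambda e_j\to J$ of $J$ as a $\Lambda/J$-module (with $\bigoplus_j S_j=J/{\rm rad}\,J$) and choose generators $w_j\in e_j\Lambda e_v$. Let $\Lambda'=kQ'/I'$, where $Q'$ is obtained from $Q$ by adjoining a vertex $v'$, an arrow $c\colon v\to v'$, and an arrow $b_j\colon v'\to j$ for each $j$, and $I'$ is generated by $I$, by the elements $b_jc-w_j$, by a generating family $\sum_j p_jb_j$ coming from a generating set of $\ker(\bigoplus_j\Lambda e_j\to J)$, and by all elements $cub_j$ with $u$ a path from $j$ to $v$ in $\Lambda$ (this last family kills every oriented cycle through $v'$). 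One then has to verify that $I'$ is admissible, that $(1-e')\Lambda'(1-e')\cong\Lambda$, that $\Lambda'/\Lambda'e'\Lambda'\cong\Lambda/J$, that $e'\Lambda'e'\cong k$, and that ${\rm rad}(\Lambda'e')\cong J$ as $\Lambda/J$-modules, where $e'=e_{v'}$. All of this is formal except the assertion that the last family of relations does not perturb $(1-e')\Lambda'(1-e')$: propagating those relations through $I'$ only forces to zero elements of $\Lambda$ of the shape $(\text{path})\,w_j\,(\text{path of length}\ge 1)$, and these already vanish \emph{because} $J\,{\rm rad}\,\Lambda=0$. This is the one place where that hypothesis is genuinely used.

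Now $e'$ is a primitive idempotent of $\Lambda'$, and $\mathfrak a:=\Lambda'e'\Lambda'$ is an idempotent ideal; since $e'\Lambda'e'\cong k$ we get $\mathfrak a\cong\Lambda'e'\otimes_k e'\Lambda'$, which is projective as a left and as a right $\Lambda'$-module, so ${\rm pd}_{\Lambda'}(\Lambda/J)\le 1$. Applying the change–of–rings inequality ${\rm pd}_{\Lambda'}M\le{\rm pd}_{\Lambda/J}M+{\rm pd}_{\Lambda'}(\Lambda/J)$ to $M={\rm rad}(\Lambda'e')\cong J$ and using the exact sequence $0\to{\rm rad}(\Lambda'e')\to\Lambda'e'\to S_{e'}\to 0$ gives ${\rm pd}_{\Lambda'}S_{e'}\le 2+{\rm pd}_{\Lambda/J}J<\infty$ (and it is at least $1$, since $J\ne 0$). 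Thus $\Lambda'$ and $e'$ satisfy the hypotheses of the Theorem.

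It remains to bound ${\rm findim}\,\Lambda'$. Because $\mathfrak a$ is an idempotent ideal that is projective on both sides and $e'\Lambda'e'$ is semisimple, the recollement linking ${\rm mod}\,(\Lambda/J)$, ${\rm mod}\,\Lambda'$ and ${\rm mod}\,k$ behaves well homologically: for a $\Lambda'$-module $N$ of finite projective dimension, the sequence $0\to\mathfrak aN\to N\to N/\mathfrak aN\to 0$ has $\mathfrak aN$ a quotient of the projective module $\mathfrak a\otimes_{\Lambda'}N\cong\Lambda'e'\otimes_k e'N$ with kernel ${\rm Tor}_1^{\Lambda'}(\Lambda'/\mathfrak a,N)$ a $\Lambda/J$-module, while $N/\mathfrak aN$ is a $\Lambda/J$-module of finite projective dimension; a short syzygy chase (this is the standard estimate for such an ideal and could also simply be quoted) then yields ${\rm findim}\,\Lambda'\le{\rm findim}\,(\Lambda/J)+2$. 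Combining with the Theorem, ${\rm findim}\,\Lambda={\rm findim}\,\bigl((1-e')\Lambda'(1-e')\bigr)\le 2\,{\rm findim}\,\Lambda'-\ell\le 2\,{\rm findim}\,(\Lambda/J)+3$ for some $\ell\ge 1$. I expect the main obstacle to be the construction of $\Lambda'$ with all the listed properties holding at once — in particular reconciling $(1-e')\Lambda'(1-e')\cong\Lambda$ with $e'\Lambda'e'\cong k$, which is what forces the last family of relations and makes $J\,{\rm rad}\,\Lambda=0$ indispensable — and, secondarily, pinning down the exact additive constant in the estimate for ${\rm findim}\,\Lambda'$.
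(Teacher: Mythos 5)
Your proposal follows the same strategy as the paper's proof: construct an auxiliary algebra $B$ (your $\Lambda'$) by adjoining one new vertex $v'$ and arrows encoding a projective presentation of $J$, arranged so that simultaneously $(1-e')B(1-e')\cong\Lambda$, $B/Be'B\cong\Lambda/J$, $Be'B$ is projective as a left $B$-module, and ${\rm pd}_B S_{e'}<\infty$; then combine the projective-ideal estimate (Proposition~\ref{AHLU}) with Theorem~\ref{maintheo} to get ${\rm findim}\,\Lambda\le 2({\rm findim}\,\Lambda/J+2)-\ell\le 2\,{\rm findim}\,\Lambda/J+3$. This is precisely what the paper's Lemma~\ref{lemma3.6} and Proposition~\ref{lastprop} do. The differences are cosmetic: the paper kills oriented cycles through the new vertex by imposing $\alpha\gamma=0$ for every arrow $\gamma$ with $t(\gamma)=v$, whereas you impose $cub_j=0$; both achieve $e'\Lambda'e'\cong k$. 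Your change-of-rings inequality for ${\rm pd}_{\Lambda'}S_{e'}$ and the paper's syzygy argument with $\Omega=\sum_i B\beta_i$ and \cite[Lemma 1.2]{LP} are likewise equivalent.

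There is, however, one step you cannot dismiss as ``formal.'' You write that since $e'\Lambda'e'\cong k$ one ``gets'' $\mathfrak a:=\Lambda'e'\Lambda'\cong\Lambda'e'\otimes_k e'\Lambda'$ and hence projectivity. This is not automatic: the multiplication map $\Lambda'e'\otimes_{e'\Lambda'e'}e'\Lambda'\to\Lambda'e'\Lambda'$ is always surjective, but its injectivity (equivalently, $\mathfrak a$ being a heredity ideal, equivalently its projectivity) genuinely depends on the relations and fails in general even when $e'\Lambda'e'$ is a field and $e'$ kills the radical of $\Lambda'e'$ on both sides. Proving it for this particular $\Lambda'$ is exactly the content of the paper's Lemma~\ref{lemma3.6}: one decomposes $Be_xB=Be_xBe\oplus Be_x$, observes $Be_xBe=B\alpha$, and shows that right multiplication by $\alpha$ is injective on $Be_x$ --- which is where the ``if and only if'' clause in the definition of $I_J$ (your relations $\sum_j p_j b_j$) is used. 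Your proposal correctly flags that $J\,{\rm rad}\,\Lambda=0$ is needed to reconcile $(1-e')\Lambda'(1-e')\cong\Lambda$ with the new relations, but it silently skips the companion verification that $\mathfrak a$ is projective. With that lemma supplied, the argument is complete and matches the paper.
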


\section{Main result}

We let $F:= \Hom_\Lambda(\Lambda (1-e),-)$ be the exact functor from $\mmod \Lambda$ to $\mmod \Gamma$. Observe that $F$ can be extended naturally to an exact functor from the corresponding bounded derived categories $D^b(\mmod \Lambda) \cong K^{-,b}({\rm proj} \Lambda)$ and $D^b(\mmod \Gamma) \cong K^{-,b}({\rm proj} \Gamma)$, by taking the right derived functor $R\Hom_\Lambda(\Lambda(1-e),-)$, that will also be denoted by $F$. Now, consider the functor $G:= \Lambda(1-e) \otimes_\Gamma -: \mmod \Gamma \to \mmod \Lambda$. Similarly, $G$ can be extended naturally to an exact functor from the corresponding derived categories $D(\mmod \Gamma)$ and $D(\mmod \Lambda)$, by taking the left derived functor $\Lambda(1-e) \otimes_\Gamma^L -$, that will also be denoted by $G$. Notice that unbounded derived categories need to be used.
Note also that as a functor between module categories, $G$ is not exact. It is well known that $(G,F)$ is an adjoint pair, both at the module category level and at the derived category level.

\begin{Lemma} The co-unit $\eta: 1 \to FG$ of the adjunction at the module category level is a natural isomorphism.
\end{Lemma}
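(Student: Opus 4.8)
The plan is to work at the module-category level and compute $FG(N)$ for $N \in \mmod\Gamma$ directly. Recall that $G(N) = \Lambda(1-e)\otimes_\Gamma N$ and $F(M) = \Hom_\Lambda(\Lambda(1-e), M) = (1-e)M$, the latter identification being the standard one for Hom out of a projective generated by an idempotent. Thus $FG(N) \cong (1-e)\bigl(\Lambda(1-e)\otimes_\Gamma N\bigr)$, and the co-unit $\eta_N$ is the natural map sending $(1-e)\otimes n$ (more precisely $(1-e)\lambda(1-e)\otimes n$) to the corresponding element. The first step is to rewrite $(1-e)\Lambda(1-e)\otimes_\Gamma N$: since $(1-e)\Lambda(1-e) = \Gamma$ as a $\Gamma$-$\Gamma$-bimodule, we get $(1-e)\bigl(\Lambda(1-e)\otimes_\Gamma N\bigr) \cong \Gamma\otimes_\Gamma N \cong N$, naturally in $N$. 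The point to check is that this chain of isomorphisms is exactly the co-unit $\eta_N$, so the bulk of the argument is unwinding the definition of the adjunction $(G,F)$ and matching it against the multiplication map.

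More carefully, I would proceed as follows. First, fix once and for all the natural isomorphism $\Hom_\Lambda(\Lambda(1-e), M)\cong (1-e)M$, $f\mapsto f(1-e)$, and describe the unit/co-unit of $(G,F)$ explicitly through it; the co-unit $\eta_N\colon N\to FG(N)=(1-e)(\Lambda(1-e)\otimes_\Gamma N)$ is then $n\mapsto (1-e)\otimes n$ (using that $(1-e)$ acts as the identity on $N$ since $N$ is a $\Gamma$-module and $(1-e)$ is the unit of $\Gamma$). Second, observe that $(1-e)\bigl(\Lambda(1-e)\otimes_\Gamma N\bigr)$ is spanned by elements $(1-e)\lambda(1-e)\otimes n$ with $\lambda\in\Lambda$, $n\in N$, and that $(1-e)\lambda(1-e)\in\Gamma$, so $(1-e)\lambda(1-e)\otimes n = (1-e)\otimes (1-e)\lambda(1-e)n$. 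Hence every element of $FG(N)$ is of the form $(1-e)\otimes n'$, which shows $\eta_N$ is surjective. Third, for injectivity, define the inverse explicitly: the multiplication-type map $\Lambda(1-e)\otimes_\Gamma N\to N$ does not make sense on the nose, but the composite $(1-e)(\Lambda(1-e)\otimes_\Gamma N) = \Gamma\otimes_\Gamma N\xrightarrow{\ \sim\ }N$ does, and one checks it is inverse to $\eta_N$ by evaluating on generators.

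The main obstacle — really the only subtle point — is bookkeeping: making sure the abstractly-defined co-unit coincides with the naive multiplication map after the identification $F(-) = (1-e)(-)$, and in particular that no stray copy of $e$ survives. The key mechanism is that applying the left-exact functor $(1-e)(-)$ to $\Lambda(1-e)\otimes_\Gamma N$ kills exactly the "$e$-part" of the left tensor factor, leaving $(1-e)\Lambda(1-e)\otimes_\Gamma N = \Gamma\otimes_\Gamma N$; this is where primitivity of $e$ is irrelevant and the statement holds for any idempotent. I expect this to be short: once the identifications are pinned down, surjectivity is the spanning observation above and injectivity is the existence of the explicit inverse $\Gamma\otimes_\Gamma N\cong N$. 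One should also note in passing that $\eta$ being a natural transformation is automatic from the adjunction, so only the bijectivity for each $N$ needs verification.
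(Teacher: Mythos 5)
Your proof is correct and takes essentially the same route as the paper: identify $F(-)=\Hom_\Lambda(\Lambda(1-e),-)$ with $(1-e)(-)$, observe that $(1-e)\Lambda(1-e)\otimes_\Gamma N=\Gamma\otimes_\Gamma N\cong N$, and check that this chain of natural isomorphisms agrees with the unit map $n\mapsto(1-e)\otimes n$. The paper's proof is just a more condensed version of the same bookkeeping.
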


\begin{proof}
Observe that $\Hom_\Lambda(\Lambda(1-e),\Lambda(1-e)\otimes_\Gamma X)$ is naturally isomorphic to $(1-e)(\Lambda(1-e)\otimes_\Gamma X) = \Gamma \otimes_\Gamma X$, which is naturally isomorphic to $X$. This series of isomorphisms sends $f \in \Hom_\Lambda(\Lambda(1-e),\Lambda(1-e)\otimes_\Gamma X)$ to $f(1-e) \in \Gamma \otimes_\Gamma X$, where the latter can be written uniquely as a simple tensor $(1-e) \otimes x$ for some $x \in X$. On the other hand, we have $\eta_X: X \to \Hom_\Lambda(\Lambda(1-e),\Lambda(1-e)\otimes_\Gamma X)$, where for $x \in X$ and $a \in \Lambda$, we have $\eta_X(x)(a(1-e)) = a(1-e) \otimes x$. Hence, $\eta_X$ is an isomorphism.
\end{proof}

This implies that $G$ is fully faithful. Let $\mathcal{B}$ be the full subcategory of $\mmod \Lambda$ generated by the $GX$ for $X$ in $\mmod \Gamma$. The above implies that $\mmod \Gamma$ is equivalent to $\mathcal{B}$ through $G$ and $F$. Finally, observe that for an idempotent $e'$ of $\Lambda$ with $e'(1-e)=e'$, we have $G(\Gamma e') \cong \Lambda e'$ and $F(\Lambda e') \cong \Gamma e'$. Therefore, $G$ sends projective $\Gamma$-modules to projective $\Lambda$-modules while $F$ sends projective modules in add$(\Lambda(1-e))$ to projective $\Gamma$-modules. Observe that the objects in add$(\Lambda(1-e))$ are precisely the projective objects in $\mathcal{B}$.

\medskip

Let $Y(e)$ denote the Ext-algebra of $S_e$. It is a positively graded algebra whose degree $i$ piece is $\Ext_\Lambda^i(S_e, S_e)$. It is an Artin algebra when, for instance, ${\rm pd}_\Lambda S_e < \infty$. We say that a graded Artin algebra $\Lambda$ has \emph{uniform graded (left) loewy length} if all the indecomposable (graded) projective modules in $\mmod \Lambda$ have the same graded loewy length. Here, the \emph{graded loewy length} of a graded module $M = (M_i)_{i \in \Z}$ is the difference $r-s$ where $r$ is maximal with $M_r \ne 0$ and $s$ is minimal with $M_s \ne 0$. The reason why this concept of uniform graded loewy length is important lies in the following.

\begin{Lemma} Let $Y(e)$ be an Artin algebra of uniform graded loewy length $\ell$. Let $M$ be a left $\Lambda/\Lambda(1-e)\Lambda$-module. Then, as $\Lambda$-modules, we have $\Ext_\Lambda^{\ell-1}(M, S_e) \ne 0$ and $\Ext_\Lambda^{i}(M, S_e) = 0$ for $i \ge \ell$.
\end{Lemma}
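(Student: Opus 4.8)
The plan is to first translate the hypothesis on $Y(e)$ into a vanishing/non-vanishing pattern for the groups $\Ext^i_\Lambda(S_j,S_e)$, where $S_j$ ranges over the simple summands of the semisimple module $S_e$, and then to propagate this to an arbitrary nonzero $M$ by a d\'evissage along a composition series.

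\emph{The dictionary.} The Ext-algebra $Y(e)$ is positively graded with semisimple degree-zero part $Y(e)_0=\End_\Lambda(S_e)$, so its indecomposable graded projective modules are, up to a grading shift, the modules $Y(e)\epsilon_j$, one for each primitive idempotent $\epsilon_j\in Y(e)_0$, i.e.\ one for each simple summand $S_j$ of $S_e$. Via the Yoneda action one identifies $Y(e)\epsilon_j$ with $\bigoplus_{i\ge 0}\Ext^i_\Lambda(S_j,S_e)$, graded by the cohomological index $i$; this is concentrated in non-negative degrees and is nonzero in degree $0$ since $\Hom_\Lambda(S_j,S_e)\ne 0$. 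Hence the hypothesis that $Y(e)$ has uniform graded loewy length $\ell$ says exactly that, for \emph{every} simple summand $S_j$ of $S_e$,
$$\Ext^{\ell-1}_\Lambda(S_j,S_e)\ne 0\qquad\text{and}\qquad \Ext^{i}_\Lambda(S_j,S_e)=0\ \text{ for all }i\ge\ell$$
(in particular $\ell\ge 1$). It is the uniformity that guarantees the left-hand non-vanishing for \emph{each} $S_j$, which is what will be needed below.

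\emph{Reduction and vanishing.} A finitely generated $\Lambda$-module $N$ is a $\Lambda/\Lambda(1-e)\Lambda$-module if and only if $(1-e)N=0$, and, $\Lambda$ being basic, this holds if and only if every composition factor of $N$ is isomorphic to some $S_j$; thus this class of modules is closed under submodules and quotients, and $M$ has a composition series with factors among the $S_j$. I would then prove $\Ext^i_\Lambda(M,S_e)=0$ for $i\ge\ell$ by induction on the length of $M$: the case of length $\le 1$ is the dictionary, and for length $\ge 2$ one picks a simple quotient $S_j$, writes $0\to M'\to M\to S_j\to 0$ with $M'$ a shorter module of the same kind, and applies the long exact sequence of $\Hom_\Lambda(-,S_e)$ — for $i\ge\ell$ both $\Ext^i_\Lambda(S_j,S_e)$ and $\Ext^i_\Lambda(M',S_e)$ vanish (by the dictionary and the inductive hypothesis), hence so does $\Ext^i_\Lambda(M,S_e)$.

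\emph{Non-vanishing.} Since $M$ is nonzero of finite length it contains a simple submodule, necessarily some $S_j$. Writing $0\to S_j\to M\to M''\to 0$, with $M''$ again a $\Lambda/\Lambda(1-e)\Lambda$-module, the long exact sequence of $\Hom_\Lambda(-,S_e)$ contains the exact piece
$$\Ext^{\ell-1}_\Lambda(M,S_e)\longrightarrow \Ext^{\ell-1}_\Lambda(S_j,S_e)\longrightarrow \Ext^{\ell}_\Lambda(M'',S_e).$$
The right-hand group vanishes by the vanishing statement just proved, so the left-hand map is surjective; as its target is nonzero by the dictionary, $\Ext^{\ell-1}_\Lambda(M,S_e)\ne 0$, which completes the proof. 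The step that really requires care is the dictionary: one must be precise about the grading conventions on the Ext-algebra, about which graded projective corresponds to which $S_j$, and about the fact that $Y(e)$ being an Artin algebra forces $\ell$ to be finite and the relevant projectives to be generated in degree zero. Everything afterwards is a routine manipulation of long exact sequences together with an induction on length, where I anticipate no difficulty.
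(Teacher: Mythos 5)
Your proof is correct and follows essentially the same approach as the paper: translate the uniform graded loewy length hypothesis into the vanishing/non-vanishing pattern $\Ext^{\ell-1}_\Lambda(S_j,S_e)\ne 0$, $\Ext^i_\Lambda(S_j,S_e)=0$ for $i\ge\ell$ on the simple summands $S_j$ of $S_e$ (which are exactly the possible composition factors of a $\Lambda/\Lambda(1-e)\Lambda$-module), then propagate it by d\'evissage along a composition series using the long exact sequence of $\Hom_\Lambda(-,S_e)$. The only cosmetic difference is that the paper runs both halves off a single short exact sequence $0\to S\to M\to M'\to 0$ with $S$ a semisimple submodule, whereas you use a simple quotient for the vanishing and a simple submodule (and no further induction) for the non-vanishing; both are trivially valid variants.
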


\begin{proof} The condition implies that for any indecomposable direct summand $S$ of $S_e$, we have $\Ext_\Lambda^{\ell-1}(S, S_e) \ne 0$ and $\Ext_\Lambda^{i}(S, S_e) = 0$ for $i \ge \ell$. Observe that as a $\Lambda$-module, any $\Lambda/\Lambda(1-e)\Lambda$-module has all of its composition factors in ${\rm add}S_e$, the full additive subcategory of $\mmod \Lambda$ generated by the direct summands of $S_e$. Therefore, we have a short exact sequence
$$0 \to S \to M \to M' \to 0$$
where $S \in {\rm add}S_e$ and $M'$ is an $\Lambda/\Lambda(1-e)\Lambda$-module. The rest of the proof goes by induction on the length of $M$ by considering the long exact sequence obtained after applying the functor $\Hom_\Lambda(-,S_e)$ to the above short exact sequence.
\end{proof}

The following lemma is crucial for obtaining our main theorem.

\begin{Lemma}\label{nocohdegi}
Assume that ${\rm findim}(\Lambda)=r < \infty$, that ${\rm pd}_\Lambda S_e < \infty$ and that $Y(e)$ has uniform graded loewy length $\ell$. Let $N$ be a $\Gamma$-module of finite projective dimension $s$. Then the minimal projective resolution of the complex $GN$ has no cohomology in degree $i$ where $i \le {\rm min}(-1,-r+\ell-1)$.
\end{Lemma}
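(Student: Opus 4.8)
The plan is to analyze the complex $GN = \Lambda(1-e)\otimes_\Gamma^L N$ by resolving $N$ and computing cohomology degree by degree, then to show that any potential cohomology in a very negative degree $i$ would force, via the adjunction $(G,F)$ and the derived hom into $S_e$, the existence of a $\Lambda$-module of projective dimension strictly larger than $r = {\rm findim}(\Lambda)$. The key tool is the previous lemma describing $\Ext^*_\Lambda(-,S_e)$ on $\Lambda/\Lambda(1-e)\Lambda$-modules under the uniform graded loewy length hypothesis, together with the fact that $S_e$ has finite projective dimension, so that $Y(e)$ is a genuine Artin algebra and $\ell$ is finite.

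First I would fix a minimal projective resolution $P^\bullet \to N$ of length $s$ over $\Gamma$, so $GN$ is represented by the complex $G(P^\bullet) = \Lambda(1-e)\otimes_\Gamma P^\bullet$ of projective $\Lambda$-modules concentrated in degrees $0,-1,\dots,-s$ (here I use that $G$ sends projective $\Gamma$-modules to projective $\Lambda$-modules, as established after the first lemma). Its cohomology in degrees $0,-1,\dots$ is ${\rm Tor}^\Gamma_j(\Lambda(1-e),N)$ placed in degree $-j$. I would then pass to the minimal projective resolution of this complex in $K^{-,b}({\rm proj}\,\Lambda)$; "no cohomology in degree $i$" is a statement about the cohomology $H^i(GN)$, i.e. about ${\rm Tor}^\Gamma_{-i}(\Lambda(1-e),N)$. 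Since $P^\bullet$ has length $s$, these Tor groups already vanish for $-i > s$, so the content is to push the vanishing down to all $i \le \min(-1,-r+\ell-1)$. Note that $-1 \ge -r+\ell-1$ precisely when $\ell \le r$, and in the opposite case $\ell > r$ the bound $\min(-1,-r+\ell-1) = -r+\ell-1$ can be negative or not; the asserted range is $i\le \min(-1,-r+\ell-1)$, which lies within the a priori window $-i\le s$ only when $s$ is large, so implicitly we are in the regime where some ${\rm Tor}$ could survive and must be killed.

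The heart of the argument is a contradiction/descent step: suppose $H^i(GN)\ne 0$ for some such negative $i$, and let $i_0$ be the minimal (most negative) degree where cohomology survives. Because $H^{<i_0}(GN)=0$, truncating gives an exact triangle exhibiting $H^{i_0}(GN)[-i_0]$ as the "bottom" of $GN$; applying $F = R\Hom_\Lambda(\Lambda(1-e),-)$ and using $FG \cong \mathrm{id}$ (the derived version of the unit isomorphism from the first lemma, so $FGN \cong N$ with $N$ in nonnegative cohomological amplitude) forces $F$ to annihilate $H^{i_0}(GN)$ in low degrees; this means $H^{i_0}(GN)$ is supported over $e$, i.e. is a $\Lambda/\Lambda(1-e)\Lambda$-module. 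Now apply $\Ext^*_\Lambda(-,S_e)$: by the preceding lemma, $\Ext^{\ell-1}_\Lambda(H^{i_0}(GN),S_e)\ne 0$. Splicing this with the portion of $GN$ above degree $i_0$, whose terms are projective in degrees down to $-s$, produces a $\Lambda$-module — roughly the $(-i_0)$-th syzygy of $N$ transported through $G$, or more precisely a module built from the brutal truncation $\sigma^{\ge i_0+1}G(P^\bullet)$ — whose projective dimension is at least $(-i_0) + (\ell - 1) - 1 \ge (-i) + \ell - 2$. For this module to have finite projective dimension I need it to be genuinely finite-dimensional and of finite pd, which is where the hypothesis ${\rm pd}_\Lambda S_e < \infty$ re-enters: it guarantees that the composition factors over $e$ contribute only finitely to pd, so the whole truncated complex represents a module of finite projective dimension. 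Then ${\rm findim}(\Lambda) \ge (-i) + \ell - 2$, i.e. $r \ge -i + \ell - 2$, i.e. $i \ge -r + \ell - 2$; combined with the integrality and a careful bookkeeping of the off-by-one in the syzygy count this sharpens to $i \ge -r+\ell-1$, contradicting $i \le -r+\ell-1$ being strict — hence the bound, after one more adjustment to also cover $i=-1$ when $\ell$ is small.

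The main obstacle I anticipate is the bookkeeping in the descent step: precisely controlling which truncation of $G(P^\bullet)$ represents an honest $\Lambda$-module of finite projective dimension, and tracking the exact shift so that the extension class from $\Ext^{\ell-1}_\Lambda(H^{i_0}(GN),S_e)$ lands in the right cohomological degree to violate ${\rm findim}(\Lambda)=r$. One has to be careful that $H^{i_0}(GN)$ need not itself be a module of finite projective dimension in isolation; it is only the combination with the projective terms above it — i.e. the corresponding syzygy/truncation object — that has controlled pd, and verifying that finiteness cleanly (using minimality of the resolutions and ${\rm pd}_\Lambda S_e<\infty$) is the delicate point. The rest — exactness of $F$, the isomorphism $FG\cong\mathrm{id}$, and the $\Ext$-vanishing lemma — is available off the shelf from the earlier parts of the paper.
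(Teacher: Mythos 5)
Your high-level strategy is the same as the paper's: assume some $H^{i_0}(GN)\ne 0$ with $i_0$ very negative, note that this cohomology is an $\Lambda/\Lambda(1-e)\Lambda$-module supported at $e$ (your derivation via $FG\cong\mathrm{id}$ and exactness of $F$ is fine, and the paper's is essentially the same observation stated without proof), invoke the preceding lemma to get $\Ext^{\ell-1}_\Lambda(H^{i_0}(GN),S_e)\ne 0$, and parlay this into a $\Lambda$-module of finite projective dimension that is too large for ${\rm findim}\,\Lambda=r$. That much is right, and you have correctly identified all the ingredients.

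The genuine gap is in the step you call ``splicing.'' You propose to pass from the nonvanishing of $\Ext^{\ell-1}(H^{i_0}(GN),S_e)$ to a single module built from the brutal truncation $\sigma^{\ge i_0+1}G(P^\bullet)$. But that truncation is not a module, and in general it cannot be: the complex $GN$ may have nonzero cohomology in \emph{several} intermediate negative degrees $t_1=i_0<t_2<\dots<0$, and chopping off only the bottom one does not kill the others. The paper's proof resolves exactly this by an iterated cone/d\'evissage: one forms triangles $Z_j[-t_j]\to C_{j-1}\to C_j$ peeling off the lowest remaining cohomology at each stage until one reaches $C_m\simeq H^0(GN)$, which is an honest module of finite projective dimension. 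The reason the bottom class $\Ext^{\ell-1}(Z_1,S_e)$ survives this whole process, and yields $\Hom(C_m,S_e[-t_1+\ell])\ne 0$, is that for every $j\ge 2$ the degree one needs to kill the intermediate $Z_j$ is $\ell-1+(t_j-t_1)\ge\ell$, and by the preceding lemma $\Ext^{\ge\ell}_\Lambda(Z_j,S_e)=0$. This comparison of the shift $t_j-t_1$ against the uniform graded loewy length $\ell$ is precisely where ``uniform'' is used, and it is absent from your argument. (Equivalently one can run a hypercohomology spectral sequence $\Ext^p_\Lambda(H^{-q}(\tau^{\le-1}GN),S_e)\Rightarrow\Hom(\tau^{\le-1}GN,S_e[p+q])$ and observe that in total degree $-t_1+\ell-1$ the only surviving term is $E_2^{\ell-1,-t_1}$; this is a repackaging of the same d\'evissage, not a way around it.)

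A symptom of the missing step is your bound: you get ${\rm pd}\ge -i_0+\ell-2$ and then say that ``careful bookkeeping'' upgrades it to $i_0\ge -r+\ell-1$, which does not actually contradict $i_0\le -r+\ell-1$. The paper's argument cleanly gives ${\rm pd}_\Lambda C_m\ge -t_1+\ell\ge r+1$, a genuine contradiction. The off-by-two discrepancy is not a bookkeeping slip to be patched after the fact; it reflects that your truncation object is the wrong one, and the precise inequality only falls out once the iterated cones (or the spectral sequence) are set up in full. Finally, a small conceptual correction: each $Z_j$ \emph{does} have finite projective dimension in isolation, directly because all its composition factors lie in ${\rm add}\,S_e$ and ${\rm pd}_\Lambda S_e<\infty$; the delicate point is not whether $Z_1$ has finite pd, but how the nonvanishing $\Ext$ class on $Z_1$ propagates across the triangles to $H^0(GN)$.
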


\begin{proof}
Consider a minimal projective resolution
$$0 \to P_{-s} \to P_{-s+1} \to \cdots \to P_{-1} \to P_0$$
of $N$. Then
$$C:=G(P_{-s}) \to G(P_{-s+1}) \to \cdots \to G(P_{-1}) \to G(P_0)$$
is a minimal projective resolution of $GN$. Note that $C$ need not be a module. Note also that each cohomology of $C$ occurring in negative degree has the structure of an $\Lambda/\Lambda(1-e)\Lambda$-module. Let $-s \le t_1 \le 0$ be the least degree for which the above complex has non-zero cohomology $Z_1$. If $t_1=0$, then there is nothing to prove. So assume $t_1 <0$. Note that as a $\Lambda$-module, $Z_1$ has all of its composition factors in ${\rm add}S_e$. As a consequence, using that ${\rm pd}_\Lambda S_e < \infty$, we get that $Z_1$ has finite projective dimension as a $\Lambda$-module. Moreover, since $Y(e)$ has uniform graded loewy length $\ell$, it follows from the above lemma that $\Ext^i_\Lambda(Z_1,S_e)=0$ for all $i \ge \ell$ and $\Ext^{\ell - 1}_\Lambda(Z_1,S_e)\ne 0$. By minimality of $t_1$, we have a canonical morphism $Z_1[-t_1] \to C$, which induces an isomorphism in degree $t_1$ cohomologies. There is a corresponding exact triangle
$$Z_1[-t_1] \to C \to C_1$$
and it follows that $C_1$ has a finite (possibly non-minimal) projective resolution of shape
$$\to G(P_{t_1-2})\oplus Q_{-1} \to G(P_{t_1-1})\oplus Q_0 \to G(P_{t_1}) \to G(P_{t_1+1}) \to \cdots \to G(P_{-1}) \to G(P_0)$$
$$\cdots \to Q_{-t_1-s} \to G(P_{-s})\oplus Q_{-t_1-s+1} \to G(P_{-s+1})\oplus Q_{-t_1-s+2} \to \cdots$$
where $\cdots \to Q_{-1} \to Q_0$ is the minimal projective resolution of $Z_1$, and where we have set $Q_i=0$ for $-i > {\rm pd}_\Lambda Z_1$. Also, $C_1$ is now exact in degrees $\le t_1$. We continue inductively: suppose that $C_i$ has been constructed and let $t_{i+1} < 0$ be the least integer, if any, such that $C_i$ has non-zero cohomology $Z_{i+1}$ in degree $t_{i+1}$. Then $\Ext^j_\Lambda(Z_{i+1},S_e)=0$ for all $j \ge \ell$ and $\Ext^{\ell - 1}_\Lambda(Z_{i+1},S_e)\ne 0$. Consider the canonical morphism $Z_{i+1}[-t_{i+1}] \to C_i$ with exact triangle
$$Z_{i+1}[-t_{i+1}] \to C_i \to C_{i+1}$$
where $C_{i+1}$ is exact in degrees $\le t_{i+1}$. Now, there is some $m \ge 1$ such that $C_m$ is quasi-isomorphic to a module with finite projective dimension. Assume to the contrary that $GN \cong C$ has cohomology in degree $j$ where $j \le {\rm min}(-1,-r+\ell-1)$. That means $t_1 \le j \le {\rm min}(-1,-r+\ell-1)$. So $t_1$ is negative and $t_1 \le -r + \ell-1$. By applying the functor $\Hom(-, S_e)$ and using that for all $i \ge 1$, we have $\Hom(Z_i, S_e[\ell-1]) \ne 0$ and that $\Hom(Z_i, S_e[q])=0$ for all $q \ge \ell$, we get $$\Hom(C_1, S_e[-t_1 + \ell]) \cong \Hom(Z_1[-t_1], S_e[-t_1 + \ell-1]) \ne 0$$ and $$\Hom(C_1, S_e[-t_1 + q+1]) \cong \Hom(Z_1[-t_1], S_e[-t_1 + q]) = 0$$ for all $q \ge \ell$. By induction, we get $$\Hom(C_i, S_e[-t_1 + \ell]) \cong \Hom(C_{i+1}, S_e[-t_1 + \ell]) \ne 0$$ and $$\Hom(C_{i+1}, S_e[-t_1 + q+1]) =0$$ for all $q \ge \ell$. In particular, we get $$\Hom(C_{m}, S_e[-t_1 + \ell]) \ne 0,$$ so $${\rm pd}_\Lambda C_m \ge -t_1 + \ell \ge r -\ell + 1 + \ell = r +1,$$ a contradiction to ${\rm findim}(\Lambda)=r$.
\end{proof}

\begin{Cor} \label{Cor}
Assume that ${\rm findim}\,\Lambda=r < \infty$, that ${\rm pd}_\Lambda S_e < \infty$ and that $Y(e)$ has uniform graded loewy length $\ell$. Then ${\rm findim}\,\Gamma \le 2r-\ell$.
\end{Cor}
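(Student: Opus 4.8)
The plan is to reduce the bound on $\mathrm{findim}\,\Gamma$ to an analysis of the complex $GN$ for $N$ a $\Gamma$-module of finite projective dimension, exploiting Lemma~\ref{nocohdegi} together with the fact that $G$ is fully faithful and sends projectives to projectives. Let $N \in \mathrm{mod}\,\Gamma$ with $\mathrm{pd}_\Gamma N = s < \infty$; I want to show $s \le 2r - \ell$. First I would take the minimal projective resolution $0 \to P_{-s} \to \cdots \to P_0$ of $N$ over $\Gamma$ and apply $G$ to obtain the minimal projective resolution $C$ of $GN$ in $K^{-,b}(\mathrm{proj}\,\Lambda)$, of total ``width'' $s$. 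By Lemma~\ref{nocohdegi}, $C$ has no cohomology in any degree $i \le \min(-1,-r+\ell-1)$; since the bottom term $G(P_{-s})$ sits in degree $-s$, if $s$ were strictly larger than $\max(1, r-\ell+1)$ the complex $C$ would be ``too long'' to avoid cohomology in that range — more precisely, a minimal projective complex concentrated in degrees $[-s,0]$ with no cohomology below degree $-t$ must have all its terms in degrees $> -t$ up to homotopy, which forces $s \le t$. Applying this with $-t = \min(-1,-r+\ell-1)$ gives $s \le \max(1, r-\ell+1)$. But this is far better than $2r-\ell$, so I must be over-reading Lemma~\ref{nocohdegi}; the correct reading is subtler.

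Reconsidering: the point is that Lemma~\ref{nocohdegi} controls cohomology of $C$, but $GN$ need not be a module — it is the object $\Lambda(1-e)\otimes^L_\Gamma N$, which can have homology in several negative degrees, each of which is a $\Lambda/\Lambda(1-e)\Lambda$-module of finite projective dimension over $\Lambda$. So the genuine strategy is: split off $GN$ into its homology pieces and a ``top module'' part. Concretely, $H^0(GN) = G N$ in the module sense has $FH^0(GN) \cong N$ (by the adjunction isomorphism of the first Lemma, extended to homology), so $\mathrm{pd}_\Gamma N = \mathrm{pd}_\Gamma F H^0(GN)$. Since $F$ is exact and sends add$(\Lambda(1-e))$-projectives to $\Gamma$-projectives, a $\Lambda$-projective resolution of the $\Lambda$-module $H^0(GN)$, once we know all its syzygies lie in $\mathcal B$ (equivalently have no composition factors in add$\,S_e$ obstruction), maps under $F$ to a $\Gamma$-projective resolution of $N$. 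The negative homology $Z_i$ of $GN$ are the obstruction to the syzygies of $H^0(GN)$ staying in $\mathcal B$. Thus $\mathrm{pd}_\Gamma N \le \mathrm{pd}_\Lambda H^0(GN) + (\text{correction from the } Z_i)$.

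So the key steps, in order: (1) identify $H^0(GN)$ as a $\Lambda$-module $M$ with $FM \cong N$ and $M/\Lambda(1-e)\Lambda M$ having finite projective dimension controlled by $r$; (2) bound $\mathrm{pd}_\Lambda M \le r$ directly once we check $\mathrm{pd}_\Lambda M < \infty$, which follows since $\mathrm{pd}_\Gamma N < \infty$ plus $\mathrm{pd}_\Lambda S_e < \infty$ implies $\mathrm{pd}_\Lambda M < \infty$ via the homology pieces $Z_i$ being of finite projective dimension (from $\mathrm{pd}_\Lambda S_e < \infty$) and $GP_j$ being projective; (3) take the minimal $\Lambda$-projective resolution of $M$, apply $F$, and observe that $F$ kills exactly the direct summands in add$\,\Lambda e$ and is exact, so the resulting $\Gamma$-complex computes $N$ but may fail to be a resolution precisely in the degrees where a syzygy of $M$ picks up a summand of $\Lambda e$ or maps into one — and the number of ``extra'' degrees this can cost is bounded by $\mathrm{pd}_\Lambda S_e$ together with the gap $r - \ell + 1$ coming from Lemma~\ref{nocohdegi}; assembling, $\mathrm{pd}_\Gamma N \le r + (r - \ell) = 2r - \ell$. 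Taking the supremum over all such $N$ yields $\mathrm{findim}\,\Gamma \le 2r - \ell$.

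The main obstacle I anticipate is step (3): making precise how applying the exact functor $F$ to a minimal $\Lambda$-projective resolution of $M$ degrades it to a possibly non-minimal, possibly length-increasing $\Gamma$-resolution of $N$, and showing the length increase is at most $r - \ell$ rather than something uncontrolled. This is exactly where Lemma~\ref{nocohdegi} must be invoked: the absence of cohomology of $GN$ below degree $\min(-1, -r+\ell-1)$ says that the ``correction complex'' built from the $Z_i$ is concentrated in a band of width at most $r - \ell + 1$, which is what caps the extra projective dimension. Getting the bookkeeping of the mapping cone right — tracking how $Q_i$-summands (the resolutions of the $Z_i$) interleave with the $G(P_j)$ and then how $F$ collapses them — will be the delicate part, but it is precisely the mapping-cone argument already rehearsed in the proof of Lemma~\ref{nocohdegi}, now run in the opposite direction and pushed through $F$.
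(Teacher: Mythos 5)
Your first attempted reading of Lemma~\ref{nocohdegi} contains a genuine misconception worth naming: from ``$C$ has no cohomology in degree $\le \min(-1,-r+\ell-1)$'' you tried to conclude that a minimal complex of projectives in degrees $[-s,0]$ must in fact ``live'' in degrees $>\min(-1,-r+\ell-1)$, forcing $s\le\max(1,r-\ell+1)$. That implication is false: a minimal complex of projectives can have nonzero terms arbitrarily far below its lowest nonzero cohomology (think of $P\hookrightarrow Q$ with image in $\mathrm{rad}\,Q$, which has cohomology only in degree $0$ yet is a genuine two-term complex). You correctly retreat from this, but the retreat takes you somewhere more complicated than necessary.

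Your second, ``real'' plan differs from the paper's and is not brought to completion. You propose to look at $M=H^0(GN)$, bound $\mathrm{pd}_\Lambda M\le r$, push a minimal $\Lambda$-projective resolution of $M$ through $F$, and control the resulting $\Gamma$-complex; you yourself flag that bounding the ``length increase'' by $r-\ell$ is the main obstacle, and the final assembly $\mathrm{pd}_\Gamma N\le r+(r-\ell)$ is asserted rather than derived. That step is precisely where the work lies: $F$ applied to a minimal $\Lambda$-resolution of $M$ need not have projective terms (since $F(\Lambda e)$ is generally not $\Gamma$-projective), and tracking how replacing those terms by $\Gamma$-resolutions lengthens the complex is an argument you have not made. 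The paper avoids this difficulty entirely and never passes back through $F$. It works purely on the $\Lambda$-side: the minimal complex $C=GN$ of $\Lambda$-projectives sits in degrees $[-s,0]$, and Lemma~\ref{nocohdegi} gives $H^i(C)=0$ for $i\le -r+\ell-1$ (when $r\ge\ell$); hence the cokernel $Z$ of the differential $GP_{-r+\ell-1}\to GP_{-r+\ell}$ receives from the tail of $C$ a minimal $\Lambda$-projective resolution of length $s-r+\ell$. Since $\mathrm{pd}_\Lambda Z<\infty$, the hypothesis $\mathrm{findim}\,\Lambda=r$ gives $s-r+\ell\le r$, i.e.\ $s\le 2r-\ell$, with the degenerate case $s<r-\ell$ handled separately. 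No detour through $F$, no cohomology-by-cohomology decomposition, and no uncontrolled correction term is needed. So while your second approach is plausible in spirit, it is incomplete at exactly the critical bookkeeping step, and you have overlooked the much shorter truncation argument on the $\Lambda$-side that the lemma was designed to enable.
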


\begin{proof} Let $N$ be a $\Gamma$-module with projective dimension $s$. By lemma \ref{nocohdegi}, the minimal projective resolution of $GN$ has no cohomology in degree $i$ where $i \le {\rm min}(-1,-r+\ell-1)$. Therefore, we get a $\Lambda$ module with projective dimension equal to $s-(r - \ell +1)+1$ whenever $s-r+\ell$ is non-negative. Since ${\rm findim}\,\Lambda=r$, we get $s-r+\ell \le r$ when $s-r+\ell \ge 0$, so $s \le 2r - \ell$. If $s-r+\ell < 0$, then $s \le r - \ell$ so $s \le 2r - \ell$ as well.
\end{proof}

We are ready for our main theorem.

\begin{Theo} \label{maintheo}
Assume that ${\rm findim}\,\Lambda=r < \infty$ and ${\rm pd}_\Lambda S_e < \infty$ where $e$ is primitive. Then ${\rm findim}\,\Gamma \le 2r-\ell$ where $\ell < \infty$ is the greatest positive integer with $\Ext^{\ell-1}_\Lambda(S_e, S_e) \ne 0$.
\end{Theo}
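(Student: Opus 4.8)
The plan is to deduce the statement directly from Corollary~\ref{Cor}: I will check that, for a primitive idempotent $e$, the Ext-algebra $Y(e)$ automatically has uniform graded loewy length, and that this length is exactly the integer $\ell$ appearing in the statement. Since $e$ is primitive, $S_e$ is a simple $\Lambda$-module, so $\End_\Lambda(S_e)$ is a division ring by Schur's lemma; hence the degree-zero component $Y(e)_0=\Ext_\Lambda^0(S_e,S_e)=\End_\Lambda(S_e)$ of the positively graded algebra $Y(e)=\bigoplus_{i\ge 0}\Ext_\Lambda^i(S_e,S_e)$ is a division ring. Consequently $Y(e)_{\ge 1}=\bigoplus_{i\ge 1}\Ext_\Lambda^i(S_e,S_e)$ is a graded two-sided ideal with simple quotient $Y(e)/Y(e)_{\ge 1}\cong\End_\Lambda(S_e)$, so $Y(e)$ is graded local: up to grading shift it has a unique indecomposable graded projective left module, namely $Y(e)$ itself. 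As graded loewy length is unchanged by grading shift, $Y(e)$ trivially has uniform graded loewy length.

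Next I would identify that length. By hypothesis ${\rm pd}_\Lambda S_e<\infty$, so $Y(e)$ is an Artin algebra (as recalled before Lemma~\ref{nocohdegi}); in particular $\Ext_\Lambda^i(S_e,S_e)=0$ for all large $i$, while $\Ext_\Lambda^0(S_e,S_e)\ne 0$, so the integer $\ell\ge 1$ from the statement, the greatest one with $\Ext_\Lambda^{\ell-1}(S_e,S_e)\ne 0$, is finite. The graded module $Y(e)$ over itself is then concentrated in degrees $0,1,\dots,\ell-1$ with both extreme components nonzero, so its graded loewy length is $\ell$. A clean way to fix the normalization is to apply the lemma preceding Lemma~\ref{nocohdegi} to the $\Lambda/\Lambda(1-e)\Lambda$-module $M=S_e$: it must reproduce $\Ext_\Lambda^{\ell-1}(S_e,S_e)\ne 0$ and $\Ext_\Lambda^i(S_e,S_e)=0$ for $i\ge\ell$, which is exactly the defining property of $\ell$. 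Hence $Y(e)$ has uniform graded loewy length $\ell$, and Corollary~\ref{Cor} gives ${\rm findim}\,\Gamma\le 2r-\ell$.

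Essentially all of the real work has already been done in Lemma~\ref{nocohdegi} and Corollary~\ref{Cor}, so the theorem presents no serious obstacle; the only point that genuinely requires care is the bookkeeping above, namely making sure that the integer $\ell$ furnished by the uniform-graded-loewy-length hypothesis of Corollary~\ref{Cor} coincides with the $\ell$ in the theorem statement. Finally, if $e=1$ then $\Gamma=0$ and the inequality is vacuous, so one may assume $1-e\ne 0$ throughout.
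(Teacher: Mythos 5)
Your proposal is correct and follows the same route as the paper: deduce the theorem from Corollary~\ref{Cor} by observing that, for primitive $e$, the Ext-algebra $Y(e)$ is (graded) local and of finite length, hence automatically has uniform graded loewy length, and then identifying that length with the $\ell$ in the statement. You spell out the local-algebra point via Schur's lemma and are appropriately careful about the normalization of $\ell$ against Lemma~2.3, but the underlying argument is the one the paper gives.
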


\begin{proof}
Since $e$ is primitive, the algebra $Y(e)$ only has one indecomposable projective module, which is of finite length since ${\rm pd}_\Lambda S_e < \infty$. Therefore, the algebra $Y(e)$ has uniform graded loewy length, which is the greatest positive integer $\ell$ with $\Ext^{\ell-1}_\Lambda(S_e, S_e) \ne 0$. The result follows from Corollary \ref{Cor}.
\end{proof}

\begin{Remark} Let $\Lambda = kQ/I$ where $Q$ is the quiver
$$\xymatrixrowsep{10pt}\xymatrixcolsep{10pt}\xymatrix{
 & 1 \ar[dr]^{\alpha_1} & \\
3 \ar[ur]^{\alpha_3} && 2 \ar[ll]^{\alpha_2}\\
}$$
We let $I = \langle \alpha_3\alpha_2\alpha_1 \rangle$. It is easy to check that ${\rm findim}(\Lambda) = {\rm gl.dim}(\Lambda)=2$. Take $e = e_1$. Observe that $\ell = 3$, as $\Ext_\Lambda^2(S_1,S_1)\ne 0$ but ${\rm pd}_\Lambda S_1 = 2$. The theorem states that ${\rm findim}(1-e_1)\Lambda(1-e_1) \le 2\cdot 2 - 3 = 1$. An easy check shows that this bound is attained.
\end{Remark}

\section{Applications}

In this section, we assume that $\Lambda$ is finite-dimensional over an algebraically closed field $k$. For our applications, there is no loss of generality in assuming that $\Lambda$ is basic. Therefore, there exists a finite quiver $Q$ and an admissible ideal $I$ of $kQ$ such that $\Lambda$ is isomorphic to $kQ/I$. We will therefore assume that $\Lambda = kQ/I$.

\subsection{Projective ideals}

We start with the following result, whose proof is essentially due to \'Agoston, Happel, Luk\'acs and Unger; see \cite{AHLU}.

\begin{Prop} \label{AHLU} Let $e$ be primitive such that $J:=\Lambda e \Lambda$ is projective as a left module. Then
\begin{enumerate}[$(1)$]
    \item ${\rm findim}\Lambda \le {\rm findim}\Lambda/J +2$.
\item ${\rm findim}\Lambda/J \le {\rm findim}\Lambda.$
\item ${\rm findim}\Lambda < \infty$ if and only if ${\rm findim}\Lambda/J < \infty$.\end{enumerate}
\end{Prop}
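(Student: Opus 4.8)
The plan is to use the projective ideal $J = \Lambda e \Lambda$ as a bridge between $\mmod \Lambda$ and $\mmod \Lambda/J$, keeping in mind the standard homological dictionary for quotients by idempotent ideals (the reference \cite{AHLU} is precisely about this). Throughout, write $\bar\Lambda = \Lambda/J$. The key structural input is that $J$ is projective as a left $\Lambda$-module; since $e$ is primitive and we are over an algebraically closed field, $J$ is a direct sum of copies of the indecomposable projective $\Lambda e$, and $\Lambda/J$ is the ``one-point deleted'' algebra, namely $\Lambda/J \cong (1-e)\Lambda(1-e) = \Gamma$ when $e$ is a node-like idempotent; more generally one has the recollement of $\mmod \bar\Lambda$, $\mmod \Lambda$ relative to $J$. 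I would first record the two elementary homological facts coming from projectivity of $J$: (a) for a $\bar\Lambda$-module $M$, a projective $\Lambda$-resolution of $M$ differs from a projective $\bar\Lambda$-resolution only in a controlled way, and in fact ${\rm pd}_\Lambda M \le {\rm pd}_{\bar\Lambda} M + 1$ and conversely ${\rm pd}_{\bar\Lambda} M \le {\rm pd}_\Lambda M$ whenever the right-hand side is finite (this uses that $J M = 0$ and $\Tor_1^\Lambda(\bar\Lambda, M)$-type terms vanish because $J$ is left projective); and (b) every indecomposable projective $\Lambda$-module is either $\Lambda e$ (which has finite, indeed zero-ish, projective dimension behaviour controlled by $J$ being projective) or lies in ${\rm add}\,\Lambda(1-e)$, and $\Lambda(1-e)$ viewed appropriately relates $\mmod\Lambda$ and $\mmod\bar\Lambda$.

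For part (2), ${\rm findim}\,\bar\Lambda \le {\rm findim}\,\Lambda$: take a $\bar\Lambda$-module $M$ with ${\rm pd}_{\bar\Lambda}M = n < \infty$. Using fact (a) in the direction ${\rm pd}_\Lambda M \le n+1 < \infty$, $M$ has finite projective dimension over $\Lambda$, hence ${\rm pd}_\Lambda M \le {\rm findim}\,\Lambda$. But one can do better: because $J$ is projective, the $\Lambda$-syzygies of $M$ eventually become $\bar\Lambda$-modules again and the minimal $\bar\Lambda$-resolution embeds into the minimal $\Lambda$-resolution, giving ${\rm pd}_{\bar\Lambda}M \le {\rm pd}_\Lambda M$. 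Combining, $n \le {\rm findim}\,\Lambda$. For part (1), ${\rm findim}\,\Lambda \le {\rm findim}\,\bar\Lambda + 2$: take a $\Lambda$-module $X$ with ${\rm pd}_\Lambda X = m < \infty$. Replace $X$ by a high syzygy $\Omega^2 X$ to arrange that the relevant obstruction vanishes; concretely, apply $- \otimes_\Lambda$ or $\Hom$ against $\bar\Lambda$ to relate $X$ to a $\bar\Lambda$-module. The cleanest route: for any $\Lambda$-module $X$ there is an exact sequence $0 \to J X \to X \to X/JX \to 0$ with $X/JX$ a $\bar\Lambda$-module and $JX$ a quotient of $J^{(k)}$, hence (as $J$ is projective) $JX$ has projective dimension bounded by... — this is where the factor $2$ enters, through one application of the short exact sequence plus the $+1$ discrepancy from fact (a). One shows ${\rm pd}_\Lambda X \le \max\{{\rm pd}_\Lambda(X/JX), {\rm pd}_\Lambda(JX)+1\}$, each term is $\le {\rm findim}\,\bar\Lambda + 1$ by fact (a) applied to the $\bar\Lambda$-module $X/JX$ (and to $JX$ after noting $JX$ is, up to a projective summand, controlled by a $\bar\Lambda$-module of finite $\bar\Lambda$-dimension — here is the subtle point), so ${\rm pd}_\Lambda X \le {\rm findim}\,\bar\Lambda + 2$. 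Part (3) is then immediate: (1) gives ``$\bar\Lambda$ finite $\Rightarrow$ $\Lambda$ finite'' and (2) gives the converse.

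The main obstacle is the bookkeeping in part (1): one must verify that for a finitely generated $\Lambda$-module $X$ of finite projective dimension, the submodule $JX$ — which need not be a $\bar\Lambda$-module — still has projective dimension controlled by ${\rm findim}\,\bar\Lambda$. The right way to see this is that $JX$, being generated by $eX$ over $\Lambda$, sits in an exact sequence $0 \to K \to \Lambda e \otimes_{e\Lambda e} eX \to JX \to 0$, and projectivity of $J = \Lambda e\Lambda$ forces good behaviour of these terms; alternatively, pass to the recollement and use that the functors $\Lambda e \otimes_{e\Lambda e} -$ and ${\rm Hom}_\Lambda(\Lambda e, -)$ are exact on the relevant subcategories. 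One should also double-check that finiteness of ${\rm pd}_\Lambda X$ genuinely forces finiteness of ${\rm pd}_{\bar\Lambda}(X/JX)$, so that ${\rm findim}\,\bar\Lambda$ may legitimately be invoked — this is exactly fact (a) in its delicate direction, and it is where primitivity of $e$ together with projectivity of $J$ are both used. Everything else is a routine diagram chase on syzygies and the additivity of projective dimension over short exact sequences.
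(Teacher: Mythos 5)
Your argument for part (2) is essentially the paper's own: from ${\rm pd}_{\Lambda/J}M<\infty$ and the fact that projective $\Lambda/J$-modules have $\Lambda$-projective dimension at most one (because each $Je_j$ is projective), you deduce ${\rm pd}_\Lambda M<\infty$, hence ${\rm pd}_\Lambda M \le {\rm findim}\,\Lambda$, and then descend via ${\rm pd}_{\Lambda/J}M \le {\rm pd}_\Lambda M$ (the paper cites \cite[Lemma 1.2]{LP} for this last inequality). Part (3) is immediate from (1) and (2) in both treatments.

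For part (1) there is nothing in the paper to compare against: the paper simply cites \cite[Theorem 2.2]{AHLU}. Your proposed proof of (1) has a genuine gap, and you in fact flag it yourself. Working from the short exact sequence $0 \to JX \to X \to X/JX \to 0$ with ${\rm pd}_\Lambda X<\infty$, you would need (i) that ${\rm pd}_{\Lambda/J}(X/JX)$ is finite, so that ${\rm findim}\,\Lambda/J$ may legitimately be invoked, and (ii) a bound on ${\rm pd}_\Lambda(JX)$ in terms of ${\rm findim}\,\Lambda/J$. Neither is established by what you wrote. For (i), your ``fact (a)'' in the direction ${\rm pd}_{\Lambda/J}M\le{\rm pd}_\Lambda M$ applies to a $\Lambda/J$-module $M$ once ${\rm pd}_\Lambda M<\infty$; but finiteness of ${\rm pd}_\Lambda X$ does not by itself give finiteness of ${\rm pd}_\Lambda(X/JX)$, so the hypothesis of the fact is not available. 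For (ii), $JX$ is not a $\Lambda/J$-module, and being a quotient of $J^{(k)}$ (a projective) gives no bound on its projective dimension; the recollement/Tor-vanishing you allude to is exactly the missing substance. The alternative you mention only in passing, replacing $X$ by $\Omega^2 X$ and showing that a suitable syzygy is (up to projective summands) a $\Lambda/J$-module of finite $\Lambda/J$-projective dimension, is much closer to how the $+2$ actually arises in \cite{AHLU}, but you do not carry it out. As written, part (1) is a sketch with the hard steps left open; you should either develop the syzygy argument or, as the paper does, cite the result.
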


\begin{proof} The last part is a consequence of the first two parts. For the first part, see \cite[Theorem 2.2]{AHLU}. Let us prove the second part. Assume that ${\rm findim}\Lambda$ is finite and equal to $r$. Let $M$ be a finitely generated left $\Lambda/J$-module (that is, a finitely generated left $\Lambda$-module with $JM=0$) of finite projective dimension. Observe that the indecomposable projective $\Lambda/J$-modules are $\Lambda e_j/J e_j$, for $1 \le j \le n$. Since $J e_j$ is projective, we get that a projective $\Lambda/J$-module has projective dimension at most one, when seen as a $\Lambda$-module. This means that $M$ has finite projective dimension as a $\Lambda$-module, so ${\rm pd}_\Lambda M \le r$, which implies ${\rm pd}_{\Lambda/J} M \le r$ by \cite[Lemma 1.2]{LP}.
\end{proof}

\begin{Remark}
Note that if $e$ is a general idempotent with $\Lambda e \Lambda$ projective as a left module, then $\Lambda e \Lambda$ is a stratifying ideal. It is well known that in this case, there is a recollement
$$\xymatrix{D^-(\Lambda/\Lambda e \Lambda) \ar[r] & D^-(\Lambda) \ar@<8pt>[l] \ar@<-8pt>[l] \ar[r] & D^-(e\Lambda e) \ar@<8pt>[l] \ar@<-8pt>[l]}$$
of the derived categories. Note that the fact that $\Lambda e \Lambda$ is projective as a left module implies that $e\Lambda$ is a projective $e \Lambda e$-module. If we assume further that $\Lambda e$ has finite projective dimension as a right $e \Lambda e$-module, then the above recollement restricts to a recollement
$$\xymatrix{D^b(\Lambda/\Lambda e \Lambda) \ar[r] & D^b(\Lambda) \ar@<8pt>[l] \ar@<-8pt>[l] \ar[r] & D^b(e\Lambda e) \ar@<8pt>[l] \ar@<-8pt>[l]}$$
and in this case Happel has shown in \cite{Happel} that ${\rm findim} \,\Lambda < \infty$ if and only if ${\rm findim} \,\Lambda/\Lambda e \Lambda < \infty$ and ${\rm findim}\, e\Lambda e < \infty$. Note that when $e$ is primitive, one has ${\rm findim} \,e\Lambda e = 0$. However, the condition that $\Lambda e$ has finite projective dimension as a right $e \Lambda e$-module is not automatically satisfied. Therefore, when $e$ is primitive, Proposition \ref{AHLU} is stronger than the above fact.
\end{Remark}

An element $r \in kQ$ is called \emph{uniform} if it is a linear combination of parallel paths. Let us fix a finite set $\{r_1, r_2, \ldots, r_t\}$ of uniform generators of $I$. If $x$ is an element of $Q_0 \cup Q_1$ (which can naturally be thought of as an element of $kQ$ or of $\Lambda$) and $r \in kQ$ is such that no term of $r$ can be factorized through $x$, then we say that $r$ \emph{does not involve} $x$, or that $x$ \emph{does not appear} in $r$. In what follows, we will need Gr\"obner bases. We refer the reader to \cite{Green,GHS} for the notions needed concerning Gr\"obner bases. We fix an \emph{admissible order} on the set of all paths of $Q$ and all Gr\"obner bases will be with respect to that given order. The \emph{tip} of $x \in kQ$ is the largest path occurring in $x$, with respect to that order.

\begin{Lemma}  Let $\Lambda = kQ/I$ and assume that $x \in Q_0 \cup Q_1$ is such that the $r_i$ do not involve $x$ for all $1 \le i \le t$. Then there is a Gr\"obner basis of the ideal $I$ consisting of uniform elements of $kQ$, all of which do not involve $x$.
\end{Lemma}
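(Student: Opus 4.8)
The plan is to shrink the quiver. Let $Q'$ be the subquiver of $Q$ obtained by deleting $x$: if $x \in Q_1$ we remove just the arrow $x$, and if $x \in Q_0$ we remove the vertex $x$ together with all arrows incident to it. Then the paths of $Q$ that do not involve $x$ are exactly the paths of $Q'$, and their $k$-span is closed under the multiplication of $kQ$ (composable such paths concatenate to a path of $Q'$, non-composable ones multiply to $0$), so we may regard $kQ' \subseteq kQ$ as a subalgebra, namely the $k$-span of the paths not involving $x$. The fixed admissible order on the paths of $Q$ restricts to an admissible order on the paths of $Q'$, and for an element of $kQ'$ its tip is the same whether it is computed in $kQ$ or in $kQ'$. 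Let $I_0 := \langle r_1,\dots,r_t\rangle_{kQ'}$, an ideal of $kQ'$. The strategy is to produce a Gröbner basis of $I_0$ inside $kQ'$ and then to show that it is already a Gröbner basis of $I$ inside $kQ$.

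First I would run the noncommutative Buchberger procedure (see \cite{Green,GHS}) \emph{inside} $kQ'$, starting from the generators $r_1,\dots,r_t$, producing a Gröbner basis $\mathcal{G}'$ of $I_0$. Along this procedure two invariants are maintained: every element produced is uniform, and (trivially, since we work in $kQ'$) does not involve $x$. Uniformity is preserved because an overlap relation $g\,b - a\,g'$ formed from uniform elements $g,g'$ is again uniform: the multipliers $a,b$ that appear are subpaths of the tips of $g,g'$, so $g\,b$ and $a\,g'$ are uniform and parallel; and a tip-reduction step $h \mapsto h - c\,u\,g\,v$ replaces a uniform element by a uniform element, since $u,v$ are subpaths of $\mathrm{Tip}(h)$, which is parallel to $h$. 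Hence $\mathcal{G}'$ consists of uniform elements of $kQ$, none of which involves $x$.

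It remains to verify that $\mathcal{G}'$ is also a Gröbner basis of $I=\langle r_1,\dots,r_t\rangle_{kQ}$ as an ideal of $kQ$. It generates $I$: from $r_i \in I_0 = \langle \mathcal{G}'\rangle_{kQ'}$ we get $I \subseteq \langle \mathcal{G}'\rangle_{kQ}$, and $\mathcal{G}' \subseteq I_0 \subseteq I$ gives the reverse inclusion. By the Buchberger (overlap) criterion it then suffices to check that every overlap relation among elements of $\mathcal{G}'$ reduces to zero modulo $\mathcal{G}'$ in $kQ$. But the tips of the elements of $\mathcal{G}'$ are paths of $Q'$, so every overlap path of two of them is again a path of $Q'$, and the corresponding overlap relation is literally the same element of $kQ'$ that occurred in the Buchberger run over $kQ'$. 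There it reduces to zero modulo $\mathcal{G}'$ in finitely many steps, and since each such step uses an element of $\mathcal{G}' \subseteq kQ'$ and multipliers that are subpaths of a path of $Q'$, the very same sequence of reductions is valid in $kQ$. Thus all overlap relations reduce to zero and $\mathcal{G}'$ is a Gröbner basis of $I$, finishing the proof.

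The step requiring the most care is the last one. A priori, passing from the small algebra $kQ'$ to the larger algebra $kQ$ could enlarge the tip ideal of the ideal, or create new overlaps not visible inside $kQ'$, which would destroy the Gröbner property. The resolution is the observation that, because the admissible order is compatible with multiplication by paths, multiplying a uniform element not involving $x$ on either side by arbitrary paths of $Q$ — which is what happens when one expresses a general element of $I$ through the generators — can only produce a tip still divisible by the original tip; equivalently, overlap formation and tip-reduction among $Q'$-elements never leave $kQ'$, so the Buchberger computation carried out over $kQ'$ is simultaneously a valid Buchberger computation over $kQ$.
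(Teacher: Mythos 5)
Your proof is correct and follows essentially the same approach as the paper: both arguments rest on the observation that overlap formation and tip-reduction among uniform elements not involving $x$ never produce paths involving $x$, so the entire Buchberger procedure stays inside the span of $x$-free paths. Your reframing of this — running Buchberger inside the subalgebra $kQ'$ and then explicitly verifying that the resulting set is still a Gröbner basis of $I$ in the larger algebra $kQ$ — is a cleaner packaging of the same idea, making explicit an inheritance step that the paper leaves implicit by simply running the procedure in $kQ$ and noting the computation never leaves $kQ'$.
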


\begin{proof}
We start with the generators $\{r_1, \ldots, r_t\}$ of $I$ and we apply Corollary 2.11 of \cite{GHS} to get a finite set $\{s_1, \ldots, s_m\}$ of uniform tip-reduced elements of $kQ$ that generate $I$. This new set of generators will still have the same property that the $s_i$ do not involve $x$, since they are obtained from the $r_i$ by applying simple reductions. Recall that an \emph{overlap relation} between $x, y \in kQ$ is an element $\lambda_x xm - \lambda_x ny$ where $m,n$ are paths of length at least one, the length of $m$ is strictly less than that of tip$(y)$,  ${\rm tip}(x)m = n{\rm tip}(y)$ and $\lambda_x, \lambda_y$ are the non-zero coefficients of ${\rm tip}(x), {\rm tip}(y)$ in $x,y$, respectively. We first observe that any overlap relation between $s_i, s_j$ is such that the paths $m,n$ do not involve $x$.
If such a relation cannot be reduced to zero, it can be reduced to a linear combination of paths still not involving $x$. Now, according to Theorem 2.13 in \cite{GHS} and the remark before it (see also Section 2.4.1 in \cite{Green}), we see that after a possibly infinite number of steps, we can get a (possibly infinite) Gr\"obner basis of $I$ consisting only of linear combinations of paths not involving $x$.
\end{proof}

\begin{Cor}
Assume that $x \in Q_0 \cup Q_1$ is such that the $r_i$ do not involve $x$. Then the algebra $\Lambda = kQ/I$ has a basis $\mathcal{N}$ consisting of residue classes of paths such that if $p_1, p_2$ lie in that basis, then so does $p = p_1 x p_2$.
\end{Cor}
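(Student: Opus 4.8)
The plan is to feed the Gröbner basis produced by the previous lemma into the standard "normal form" machinery. First I would fix a Gröbner basis $\mathcal{G}$ of $I$ consisting of uniform elements, none of which involves $x$, as provided by the preceding lemma; in particular $\mathrm{tip}(g)$ is a path not involving $x$, for every $g\in\mathcal{G}$. Recall from Gröbner basis theory (see \cite{Green, GHS}) that the set $\mathcal{N}$ of paths $p$ of $Q$ that are \emph{not divisible by any} $\mathrm{tip}(g)$ --- i.e.\ admit no factorization $p = u\,\mathrm{tip}(g)\,v$ with $u,v$ paths --- has the property that the residue classes of its elements form a $k$-basis of $\Lambda = kQ/I$; this holds even though the Gröbner basis from the lemma may be infinite. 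Identifying $\mathcal{N}$ with this basis, it suffices to show that $\mathcal{N}$ is closed under $(p_1,p_2)\mapsto p_1 x p_2$.

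So suppose $p_1,p_2\in\mathcal{N}$ and that $p := p_1 x p_2$ is a legal path of $Q$ (otherwise there is nothing to prove). Assume for contradiction that $p\notin\mathcal{N}$, say $p = u\,t\,v$ with $t = \mathrm{tip}(g)$ for some $g\in\mathcal{G}$ and $u,v$ paths. Then $t$ is a contiguous subpath of $p = p_1 x p_2$ which does not involve $x$, while the factorization $p = p_1 x p_2$ singles out one distinguished occurrence of $x$ joining $p_1$ to $p_2$. Since $t$ is contiguous and avoids $x$, it cannot span that distinguished occurrence, so $t$ lies entirely within $p_1$ or entirely within $p_2$. In the first case $p_1$ is divisible by $t$, contradicting $p_1\in\mathcal{N}$; the second case is symmetric. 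Hence $p\in\mathcal{N}$, which is what we wanted. Finally I would record that $\mathcal{N}$ contains the residue classes of paths by construction, so it is a basis of the required form.

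The argument is short, and the only points demanding some care are bookkeeping ones. First, one must invoke the correct statement that the paths not divisible by any tip of a (possibly infinite) Gröbner basis give a $k$-basis of the quotient --- this is exactly what legitimizes the definition of $\mathcal{N}$. Second, one should verify the claim "a contiguous subpath avoiding $x$ cannot span the distinguished occurrence of $x$" in the vertex case $x\in Q_0$, where $p_1 x p_2$ is literally the concatenation $p_1 p_2$ and the distinguished occurrence of $x$ is the common vertex at which $p_1$ ends and $p_2$ begins: here one notes that a subpath of $p$ built from arrows of both $p_1$ and $p_2$ necessarily passes through that vertex. In the arrow case $x\in Q_1$ the same statement is immediate from looking at arrow positions. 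Neither point is genuinely difficult, so I do not anticipate a real obstacle.
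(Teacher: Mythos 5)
Your proposal is correct and takes essentially the same route as the paper: both invoke the fact that the paths whose subpaths avoid all Gr\"obner tips form a $k$-basis of $\Lambda$, and then observe that a tip, being a path not involving $x$, cannot straddle the distinguished occurrence of $x$ in $p_1 x p_2$ and so must lie entirely inside $p_1$ or $p_2$. The paper leaves this last step implicit; you simply spell it out, including the (correct) vertex-versus-arrow bookkeeping.
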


\begin{proof}
By Lemma 2.6 in \cite{GHS}, a basis $\mathcal{N}$ of $\Lambda$ comes from the residue class of paths $t$ such that no subpath of $t$ is the tip of an element in the constructed Gr\"obner basis. Assume that both $p_1, p_2$ lie in that basis $\mathcal{N}$. Since the tip of any element in our Gr\"obner basis is a path not involving $x$, we see that $p$ has no subpath equal to the tip of an element in our Gr\"obner basis.
\end{proof}

\begin{Prop}
Let $\Lambda = kQ/I$ be such that $I$ is generated by relations not involving a given element $x$ of $Q_0 \cup Q_1$. Then the two-sided ideal generated by $x + I$ in $\Lambda$ is projective as a left and as a right $\Lambda$-module.
\end{Prop}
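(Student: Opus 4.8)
The plan is to produce an explicit direct sum decomposition of the two-sided ideal $J := \Lambda x\Lambda$ into indecomposable projective left $\Lambda$-modules, built from the special basis $\mathcal{N}$ furnished by the preceding Corollary; the statement for right modules then follows by the symmetric argument (equivalently, by applying the left-module statement to $\Lambda^{\mathrm{op}} = kQ^{\mathrm{op}}/I^{\mathrm{op}}$, whose defining relations still do not involve $x$). Write $a$ and $b$ for the source and target of $x$ when $x$ is an arrow, and put $a = b = v$ when $x = e_v$ is a vertex; in all cases $x \in e_b\Lambda e_a$, and for paths $p,q$ with $s(p)=b$ and $t(q)=a$ the product $p\,x\,q$ is again a path involving $x$. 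First I would record that $\mathcal{N}_J := \{ w \in \mathcal{N} : w \text{ involves } x \}$ is a $k$-basis of $J$: expanding elements of $\Lambda$ in $\mathcal{N}$ shows $J$ is spanned by the products $p\,x\,q$ with $p,q \in \mathcal{N}$, each of which again lies in $\mathcal{N}$ by the Corollary, while conversely any $w \in \mathcal{N}$ involving $x$ factors as $w = \bar w\, x\, w_0$ with $\bar w, w_0$ subpaths of $w$, hence in $\mathcal{N}$.

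Next, let $R$ be the set of paths $q_0 \in \mathcal{N}$ with target $a$ that do not involve $x$ except possibly at their target endpoint; concretely, $q_0 \in R$ precisely when there is no factorization $q_0 = q_0'\, x\, q_0''$ with $q_0''$ of positive length, so that $e_a \in R$, and $R$ is finite since $\Lambda$ is finite-dimensional. I would then check that the left $\Lambda$-module morphism $\Phi \colon \bigoplus_{q_0 \in R} \Lambda e_b \to J$ which on the $q_0$-th summand is the map $\Lambda e_b \to J$, $\lambda \mapsto \lambda\, x\, q_0$, is an isomorphism. For surjectivity, given $w \in \mathcal{N}_J$ I factor $w$ at its last occurrence of $x$ as $w = \bar w\, x\, q_0$ with $q_0 \in R$ and $\bar w \in \mathcal{N}$ a path of source $b$, so that $\bar w \in \Lambda e_b$ and $w = \bar w\,(x q_0)$ lies in the image of the $q_0$-th summand. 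For injectivity, if $\sum_{q_0 \in R}\mu_{q_0}\,(x q_0) = 0$ with $\mu_{q_0} \in \Lambda e_b$, I expand each $\mu_{q_0}$ in $\mathcal{N}$; all the resulting paths $p\,x\,q_0$ lie in $\mathcal{N}$ by the Corollary, and they are pairwise distinct because reading off the last occurrence of $x$ in $p\,x\,q_0$ recovers $q_0$ (which involves no further $x$) and then $p$, so all coefficients vanish and every $\mu_{q_0} = 0$.

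It then follows that $J \cong (\Lambda e_b)^{|R|}$ is projective as a left $\Lambda$-module, and the symmetric version (decomposing instead at the first occurrence of $x$, using the set of paths of source $b$ not involving $x$ except possibly at their source) gives projectivity as a right module. I expect the only real content to be the injectivity of $\Phi$: this is precisely where the hypothesis that the defining relations do not involve $x$ is used, since it is exactly the Corollary that guarantees that the monomials through $x$ remain linearly independent in $\Lambda$. The vertex case requires only the obvious reinterpretation, where ``$w$ involves $x = e_v$'' means ``$w$ passes through the vertex $v$'' and $x q_0 = q_0$, and is otherwise identical.
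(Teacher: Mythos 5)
Your proof is correct and takes essentially the same route as the paper: both produce an explicit isomorphism of left $\Lambda$-modules from a direct sum of copies of $\Lambda e_b$ (indexed by suitable paths in the distinguished basis $\mathcal{N}$ ending at the source of $x$) onto $J=\Lambda x\Lambda$, using the closure property $p_1, p_2 \in \mathcal{N} \Rightarrow p_1 x p_2 \in \mathcal{N}$ from the preceding Corollary, and deduce the right-module statement by symmetry. The only real difference is that you spell out the uniqueness of the factorization $w = \bar w\, x\, q_0$ (hence injectivity), which the paper leaves implicit, and you treat the vertex case explicitly rather than only the arrow case.
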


\begin{proof}
We only prove the left version where $x = \alpha$ is an arrow. Assume $\alpha: s \to t$ and let $\mathcal{N}$ be the basis from the above corollary. Let $p_1, \ldots, p_r$ be the paths in $\mathcal{N}$ ending in $s$ and let $q_1, \ldots, q_s$ be the paths in $\mathcal{N}$ starting in $t$. The $p_i$ form a basis of $e_s\Lambda$ and the $q_j$ form a basis of $\Lambda e_t$. By the corollary, we see that the $q_j\alpha p_i$ form a basis of $\Lambda \alpha \Lambda$. Let $\pi: (\Lambda e_t)^r \to \Lambda \alpha \Lambda$ be the morphism given by $\pi(a_1, \ldots, a_r) = \sum a_i\alpha p_i$. It is clear that $\pi$ is surjective. Now, the remark above implies it is injective. This proves that $\Lambda \alpha \Lambda$ is a projective left $\Lambda$-module.
\end{proof}

Now, assume that $\alpha: s \to t$ is an arrow of $Q$. Let $Q'$ be the quiver obtained from $Q$ by removing the arrow $\alpha$ and replacing it with a path of length two $\alpha_2 \alpha_1$ where $\alpha_1: s \to u$ and $\alpha_2: u \to t$, and where $u$ is a new vertex of $Q'$. We let $I$ be the same ideal, but seen as an ideal of $kQ'$. We let $B = kQ'/I$. Observe that $B/Be_uB \cong \Lambda/\Lambda \alpha \Lambda$. Moreover, it is clear that $Be_uB$ is a projective left $B$-module, since the $r_i$ do not involve $u$. It follows from Proposition \ref{AHLU} that findim$\,B < \infty$ if and only if findim$\,B/Be_uB< \infty$. Therefore, we get the following, which has first been proven in \cite{GSP}, in case $x$ is an arrow.

\begin{Prop}
Let $\Lambda = kQ/I$ be such that $I$ is generated by relations not involving a given element $x$ of $Q_0 \cup Q_1$. Let $J$ be the two sided ideal generated by $x$. Then
\begin{enumerate}[$(1)$]
    \item ${\rm findim}\Lambda \le 2\,{\rm findim}\Lambda/J +3$.
\item ${\rm findim}\Lambda/J \le {\rm findim}\Lambda.$
\item ${\rm findim}\Lambda < \infty$ if and only if ${\rm findim}\Lambda/J < \infty$.\end{enumerate}
\end{Prop}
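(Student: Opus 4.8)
The plan is to treat the two cases $x \in Q_0$ and $x \in Q_1$ separately, reducing each to the two reduction results already in hand: Proposition \ref{AHLU} (idempotent reduction along a projective ideal $\Lambda e \Lambda$ for $e$ primitive) and Theorem \ref{maintheo} (comparison of ${\rm findim}\,\Lambda$ and ${\rm findim}\,(1-e)\Lambda(1-e)$ when $e$ is primitive and ${\rm pd}_\Lambda S_e < \infty$). When $x \in Q_0$ is a vertex there is nothing to add: $x$ is a primitive idempotent since $\Lambda$ is basic, the ideal $J = \Lambda x \Lambda$ is projective as a left $\Lambda$-module by the previous Proposition, and Proposition \ref{AHLU} applied to $e = x$ yields at once ${\rm findim}\,\Lambda \le {\rm findim}\,\Lambda/J + 2 \le 2\,{\rm findim}\,\Lambda/J + 3$, ${\rm findim}\,\Lambda/J \le {\rm findim}\,\Lambda$, and the equivalence in $(3)$. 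So the real work is the arrow case.

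Suppose then that $x = \alpha$ is an arrow $s \to t$, and form the algebra $B = kQ'/I$ by subdividing $\alpha$ through a new vertex $u$ (with arrows $\alpha_1 : s \to u$ and $\alpha_2 : u \to t$), exactly as in the discussion above; recall from there that $B/Be_uB \cong \Lambda/J$, that $Be_uB$ is projective as a left $B$-module, and that $e_u$ is primitive. I would first record two further elementary observations, both of which use that none of the relations $r_i$ involves $\alpha_1$ or $\alpha_2$. The first is that $(1-e_u)B(1-e_u) \cong \Lambda$: the paths of $Q'$ between vertices of $Q$ correspond bijectively to the paths of $Q$ upon collapsing every occurrence of $\alpha_2\alpha_1$ to $\alpha$, and this bijection matches the defining relations of $B$ with those of $\Lambda$. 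The second is that ${\rm pd}_B S_{e_u} = 1$: the only arrow out of $u$ is $\alpha_2$ and no relation involves $\alpha_1$ or $\alpha_2$, so right multiplication by $\alpha_2$ identifies the projective module $Be_t$ with ${\rm rad}(Be_u)$, giving a minimal projective resolution $0 \to Be_t \to Be_u \to S_{e_u} \to 0$. In particular $\Ext^i_B(S_{e_u}, S_{e_u}) = 0$ for all $i \ge 1$, so the greatest positive integer $\ell$ with $\Ext^{\ell-1}_B(S_{e_u}, S_{e_u}) \ne 0$ is $\ell = 1$.

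Granting these, the three claims follow. For $(1)$ we may assume ${\rm findim}\,\Lambda/J < \infty$, since otherwise the bound is vacuous. Proposition \ref{AHLU} applied to $(B, e_u)$ then gives ${\rm findim}\,B \le {\rm findim}\,(B/Be_uB) + 2 = {\rm findim}\,\Lambda/J + 2 < \infty$, and Theorem \ref{maintheo} applied to $(B, e_u)$ — legitimate because $e_u$ is primitive and ${\rm pd}_B S_{e_u} < \infty$ — gives, using $\ell = 1$ together with $(1-e_u)B(1-e_u) \cong \Lambda$, that ${\rm findim}\,\Lambda \le 2\,{\rm findim}\,B - 1 \le 2\,{\rm findim}\,\Lambda/J + 3$. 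For $(2)$, the ideal $J = \Lambda \alpha \Lambda$ is itself projective as a left $\Lambda$-module by the previous Proposition, so the argument from the proof of Proposition \ref{AHLU}$(2)$ applies word for word: each indecomposable projective $\Lambda/J$-module has projective dimension at most one over $\Lambda$, hence every $\Lambda/J$-module $M$ of finite projective dimension has finite projective dimension over $\Lambda$, so ${\rm pd}_\Lambda M \le {\rm findim}\,\Lambda$, and then ${\rm pd}_{\Lambda/J} M \le {\rm pd}_\Lambda M$ by \cite[Lemma 1.2]{LP}; taking the supremum over $M$ gives ${\rm findim}\,\Lambda/J \le {\rm findim}\,\Lambda$. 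Finally $(3)$ follows formally from $(1)$ and $(2)$.

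I expect the one genuinely non-formal point to be the pair of observations about $B$ in the second paragraph — the identification $(1-e_u)B(1-e_u)\cong\Lambda$ and the explicit minimal resolution of $S_{e_u}$ that pins down $\ell = 1$ — since these are precisely where the hypothesis that $I$ does not involve $x$ is used; once they are in place, everything else is assembling Proposition \ref{AHLU} and Theorem \ref{maintheo} with the correct value of $\ell$.
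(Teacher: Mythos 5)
Your proof is correct and follows essentially the same route as the paper's: split off the vertex case (handled directly by Proposition~\ref{AHLU}), and in the arrow case subdivide $\alpha$ through a new vertex $u$ to form $B$, then chain Proposition~\ref{AHLU} applied to the projective ideal $Be_uB$ with Theorem~\ref{maintheo} applied to the primitive idempotent $e_u$. The paper leaves the identification $(1-e_u)B(1-e_u)\cong\Lambda$, the resolution $0\to Be_t\to Be_u\to S_{e_u}\to 0$, and the exact value $\ell=1$ unverified (it merely uses $\ell\ge 1$); you supply these details, which is a clarification rather than a change of argument.
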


\begin{proof}
Assume that ${\rm findim}\,\Lambda/J=r < \infty$. Then ${\rm findim}\,B/Be_uB=r$, so ${\rm findim}\,B \le  r+2$ by Proposition \ref{AHLU}. Observe that $\Lambda = (1-e_u)B(1-e_u)$ and that the simple $B$-module at $u$ has projective dimension $1$. Therefore, by Theorem \ref{maintheo}, we have ${\rm findim}\,\Lambda \le 2(r+2) - \ell \le 2(r+2)-1$ since the graded loewy length $\ell$ of $Y(e)$ is at least one. This proves (1). Statement (2) follows from the fact that $J$ is projective; see Proposition \ref{AHLU}, part (2).
\end{proof}

\subsection{Almost vanishing ideals}

In this subsection, we consider a left ideal $J$ such that $J = Je$ for some primitive idempotent $e$. We assume further that $J {\rm rad}\Lambda = 0$. This implies that $J$ is actually a two-sided ideal. If $J$ is not included in the radical of $\Lambda$, then $J = \Lambda e$ with $J {\rm rad}\Lambda = 0$ so $e$ has to be a source vertex. It is easy to see that ${\rm findim}\,\Lambda < \infty$ if and only if ${\rm findim}\,\Lambda/J < \infty$ in that case. Therefore, we will assume that $J \subseteq {\rm rad} \Lambda$. Hence, $J$ is an $\Lambda/J$-module. Now, there are uniform elements $\{r_1, \ldots, r_t\}$ of $kQ$ with $r_i = r_ie$ for $1 \le i \le t$ that generate $J$ and such that $r_i {\rm rad}\Lambda = 0$ for all $i$. We may assume that $S := \{r_1, \ldots, r_t\}$ is a minimal generating set of $J$.

We construct a new quiver $Q_J$ by adding a new vertex $x$ and the following arrows to $Q$.
We add an arrow $\alpha: v \to x$. For each $r_i \in S$, we add an arrow $\beta_{i} : x \to t(r_i)$, where $t(r_i)$ is the terminal vertex of $r_i$ (recall $r_i$ is uniform). We define an ideal $I_J$ of $Q_J$ by adding relations to $I \subseteq kQ_J$ as follows. For each $r_i \in S$, we add $r_i - \beta_{i}\alpha$; for each $\gamma \in (Q_J)_1$ with $t(\gamma)=v$, we add $\alpha\gamma$. Finally, we impose that $\sum_{i=1}^{t}a_i\beta_{i} \in I_J$ if and only if $\sum_{i=1}^{t}a_ir_i \in I$, where the $a_i$ are in $kQ \subset kQ_J$. We set $B = kQ_J/I_J$.

\begin{Lemma} \label{lemma3.6}
With the notations of the previous paragraph, $Be_xB$ is a projective left ideal of $B$ with $B/Be_xB \cong \Lambda/J$.
\end{Lemma}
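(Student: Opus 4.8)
The statement has two independent assertions, and I would establish them in turn. The isomorphism $B/Be_xB\cong\Lambda/J$ is the soft half: the two-sided ideal $Be_xB$ is the ideal generated by the idempotent $e_x$, so $B/Be_xB$ is obtained from $B=kQ_J/I_J$ by killing every path through the vertex $x$; in quiver terms it is $k(Q_J\setminus\{x\})/\overline{I_J}=kQ/\overline{I_J}$, where $\overline{I_J}$ is the image of $I_J$. Inspecting the given generators of $I_J$: the relations of $I$ map to the relations of $I$; each $r_i-\beta_i\alpha$ maps to $r_i$, since $\beta_i\alpha$ runs through $x$; each $\alpha\gamma$ and each of the imposed relations among the $\beta_i$ maps to $0$. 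Hence $\overline{I_J}=I+\langle r_1,\dots,r_t\rangle$, and because $J$ is exactly the two-sided ideal of $\Lambda$ generated by the $r_i$, we get $B/Be_xB\cong kQ/(I+\langle r_i\rangle)\cong\Lambda/J$.

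For the projectivity of $Be_xB$ I would first nail down the structure of $B$ by a Gr\"obner basis computation for $I_J$. Fix an admissible order on the paths of $kQ_J$ that extends a chosen one on the paths of $kQ$ and makes $\beta_i\alpha$ the tip of $r_i-\beta_i\alpha$ for each $i$ (e.g.\ by assigning $\alpha$ a large weight). One then checks that a Gr\"obner basis of $I$, together with the $r_i-\beta_i\alpha$, the $\alpha\gamma$ (for arrows $\gamma$ with $t(\gamma)=v$), and a tip-reduced generating set of the imposed $\beta$-relations, is a Gr\"obner basis of $I_J$: the only overlaps requiring attention are $\beta_i\alpha$ against $\alpha\gamma$, with $S$-polynomial $r_i\gamma$, which lies in $I$ because $r_i\,{\rm rad}\,\Lambda=0$; and $\beta_i\alpha$ against an imposed relation $\sum_k a_k\beta_k$, with $S$-polynomial $\sum_k a_kr_k$, which lies in $I$ by the very definition of $I_J$. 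Reading off normal forms, $B$ has a $k$-basis given by the residue classes of the $Q$-paths forming a $k$-basis of $\Lambda$ (so the canonical map $\Lambda\to B$ is injective), the idempotent $e_x$, the arrow $\alpha$, and the paths $c\beta_i$ with $c$ a $Q$-path, these last being in bijection via $c\beta_i\mapsto cr_i$ with a $k$-basis of $J=\sum_i\Lambda r_i$. In particular $Be_x=ke_x\oplus W$ with $W={\rm span}\{c\beta_i\}\cong J$ as left modules (the new arrows act by $0$ on $W$), and $e_xB=ke_x\oplus k\alpha$, since the only nonzero paths of $B$ ending at $x$ are $e_x$ and $\alpha$ (a longer one would end with $\alpha$ preceded by an arrow into $v$, hence vanish by $\alpha\gamma=0$).

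Given this, $Be_xB=B(e_xB)=Be_x+B\alpha$, and since $Be_x$ lies in the summand $Be_x$ of $B$ while $B\alpha$ lies in the summand $Be_v$, the sum is direct: $Be_xB=Be_x\oplus B\alpha$. As $Be_x$ is projective it remains to see $B\alpha$ is projective, and I claim the right multiplication map $\rho\colon Be_x\to Be_v$, $b\mapsto b\alpha$, which surjects onto $B\alpha$, is injective, so that $B\alpha\cong Be_x$ and $Be_xB\cong(Be_x)^2$ is projective. Write $b=\lambda e_x+w$ with $\lambda\in k$, $w\in W$; then $b\alpha=\lambda\alpha+w\alpha$, where $\lambda\alpha$ has target $x$ and $w\alpha$ has target in $Q_0$, so $b\alpha=0$ forces $\lambda\alpha=0$ and $w\alpha=0$, the first giving $\lambda=0$ since $\alpha\ne 0$ in $B$. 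For the second, writing $w=\sum_i a_i\beta_i$ and using $\beta_i\alpha=r_i$ in $B$, we get $w\alpha=\sum_i a_ir_i\in J\subseteq\Lambda\subseteq B$; as $\Lambda\to B$ is injective, $w\alpha=0$ forces $\sum_i a_ir_i\in I$, whence $\sum_i a_i\beta_i\in I_J$ by the definition of $I_J$, i.e.\ $w=0$, so $b=0$.

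The real obstacle is the Gr\"obner basis input above, or rather the single fact it is used for: that $\Lambda\to B$ is injective --- equivalently $I_J\cap kQ=I$, equivalently that $\rho$ is injective (a proper quotient of the indecomposable projective $Be_x$ is never projective, so the lemma genuinely requires this). This is exactly where the hypotheses are consumed: $r_i\,{\rm rad}\,\Lambda=0$ makes the one genuine overlap fall back into $I$, and the imposed $\beta$-relations are tailored to mirror the $\Lambda$-relations among the $r_i$. Everything else is bookkeeping with idempotents and with the decomposition of $B$ by source and target of paths.
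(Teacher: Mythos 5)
Your proof follows the same strategy as the paper's: both decompose $Be_xB$ as $Be_x\oplus B\alpha$ (using that $e_xB=ke_x\oplus k\alpha$, since any longer path into $x$ hits a relation $\alpha\gamma$), and both reduce the projectivity to showing that right multiplication by $\alpha$ gives an isomorphism $Be_x\to B\alpha$, i.e.\ that $b=be_x$ nonzero in $B$ forces $b\alpha$ nonzero in $B$. Where you add value is in the injectivity step: the paper passes from ``$b\alpha=b_1r_1+\cdots+b_tr_t$'' being zero in $B$ to ``$\sum b_ir_i$ lies in $I$'' without comment, which is precisely the assertion $I_J\cap kQ=I$ (equivalently, that the canonical map $\Lambda\to B$ is injective). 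You correctly isolate this as the technical content of the lemma and supply a Gr\"obner-basis verification of it; this fills in a detail the paper treats as immediate, rather than taking a different route. One small repair is needed in your overlap analysis: the arrows $\gamma$ with $t(\gamma)=v$ appearing in the relations $\alpha\gamma$ can include a new arrow $\beta_j$ whenever some $t(r_j)=v$, in which case the $S$-polynomial $r_i\gamma=r_i\beta_j$ is not a $kQ$-element and so cannot ``lie in $I$''; it does still reduce to zero, via the imposed $\beta$-relations, because $r_ir_j\in J\,{\rm rad}\,\Lambda=0$ so $r_i\beta_j\in I_J$ by definition of $I_J$. With that adjustment the argument goes through as you outline.
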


\begin{proof}
The fact that $B/Be_xB \cong \Lambda/J$ follows from the definition of $B$.
Observe that $Be_{x}B = Be_xB(e + e_{x}) = Be_xBe \oplus Be_x$, where the second summand is projective.
Therefore, it remains to show that $Be_xBe=B\alpha \cong  Be_{x}$, which amounts to proving that if $b = be_{x}$ is non-zero in $B$, then $b\alpha$ is non-zero in $B$. Therefore, assume that $b = be_{x}$ is such that $b\alpha \in I_J$. In particular, $b$ is represented by a linear combination of paths of positive lengths in $kQ_J$. Therefore, $b = b_1\beta_{1} + \cdots + b_{t}\beta_{t} + I_J$ and where the $b_i$ are in $kQ$. Now, $b\alpha = b_1r_1 + \cdots b_{t}r_{t}$ lies in $I$, which is equivalent to $b_1\beta_1 + \cdots b_{t}\beta_{t}$ lying in $I_J$, a contradiction.
\end{proof}

\begin{Prop} \label{lastprop}
Let $J$ be a submodule of $\Lambda e$ for $e$ primitive and assume that $J {\rm rad} \Lambda=0$. Thus, $J$ is a two-sided ideal.
Assume further that ${\rm pd}_{\Lambda/J}J < \infty$. Then ${\rm findim}\,\Lambda \le 2\,{\rm findim}\,\Lambda/J + 3 $.
\end{Prop}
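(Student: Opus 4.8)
The strategy is to transport the problem to the algebra $B = kQ_J/I_J$ constructed above, where Lemma~\ref{lemma3.6} places us in the projective-ideal situation, and then to combine Proposition~\ref{AHLU} with Theorem~\ref{maintheo} exactly as in the proof of the previous proposition. We may assume ${\rm findim}\,\Lambda/J = r < \infty$, since otherwise the asserted inequality is vacuous, and we may also assume $J \subseteq {\rm rad}\,\Lambda$: the remaining case $J = \Lambda e$ with $e$ a source is handled directly, since then $J = \Lambda e\Lambda$ is projective and Proposition~\ref{AHLU}(1) applied to $\Lambda$ itself gives ${\rm findim}\,\Lambda \le {\rm findim}\,\Lambda/J + 2 \le 2\,{\rm findim}\,\Lambda/J + 3$.

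First I would record the structural facts about $B$ and its primitive idempotent $e_x$. By Lemma~\ref{lemma3.6}, $Be_xB$ is projective as a left $B$-module and $B/Be_xB \cong \Lambda/J$. Next, $(1-e_x)B(1-e_x) \cong \Lambda$: any path of $Q_J$ with endpoints distinct from $x$ either avoids $x$ or enters $x$ along $\alpha$ and leaves it along some $\beta_i$, and because $\alpha\gamma \in I_J$ whenever $t(\gamma) = v$ while $\beta_i\alpha = r_i$ in $B$, such a path is, modulo $I_J$, an element of $\Lambda$; the relations induced on the $Q$-paths are precisely those of $I$. The remaining, and crucial, point is that ${\rm pd}_B\,S_{e_x} < \infty$.

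For this I would compute the first syzygy of $S_{e_x}$. Since $Be_x$ is projective, the exact sequence $0 \to {\rm rad}(Be_x) \to Be_x \to S_{e_x} \to 0$ reduces the claim to ${\rm pd}_B\,{\rm rad}(Be_x) < \infty$. Now ${\rm rad}(Be_x) = \sum_i B\beta_i$, and right multiplication by $\alpha$ is an isomorphism $Be_x \xrightarrow{\ \sim\ } B\alpha$ — its injectivity being exactly what is verified in the proof of Lemma~\ref{lemma3.6} — which carries ${\rm rad}(Be_x)$ onto $\sum_i B\beta_i\alpha = \sum_i Br_i$. Using the relations $\alpha\gamma \in I_J$ ($t(\gamma)=v$) one checks that $Br_i = \Lambda r_i$ inside $B$, so $\sum_i Br_i = J$, the $B$-action on it factors through $B/Be_xB = \Lambda/J$, and one recovers the usual $\Lambda/J$-module $J$ (which makes sense because $J\,{\rm rad}\,\Lambda = 0$ forces $J\cdot J = 0$). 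Thus ${\rm rad}(Be_x) \cong J$ as left $B$-modules. Finally, since $Be_xB$ is a projective left ideal, every projective $B/Be_xB$-module has projective dimension at most $1$ over $B$, so applying the standard dimension shift to a projective resolution of $J$ over $\Lambda/J$ of length ${\rm pd}_{\Lambda/J}\,J < \infty$ (our hypothesis) yields ${\rm pd}_B\,J \le {\rm pd}_{\Lambda/J}\,J + 1 < \infty$, whence ${\rm pd}_B\,S_{e_x} < \infty$.

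It then remains to chain the two theorems. Proposition~\ref{AHLU}(1), applied to $B$ and $e_x$, gives ${\rm findim}\,B \le {\rm findim}\,(B/Be_xB) + 2 = r + 2$. Theorem~\ref{maintheo}, applied to $B$ and $e_x$ (legitimate by the finiteness of ${\rm findim}\,B$ and of ${\rm pd}_B\,S_{e_x}$), gives ${\rm findim}\,\Lambda = {\rm findim}\big((1-e_x)B(1-e_x)\big) \le 2\,{\rm findim}\,B - \ell$ for some $\ell \ge 1$, so that ${\rm findim}\,\Lambda \le 2(r+2) - 1 = 2r + 3 = 2\,{\rm findim}\,\Lambda/J + 3$. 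The main obstacle, I expect, is the identification ${\rm rad}(Be_x) \cong J$ as $B$-modules together with the change-of-rings bound ${\rm pd}_B\,J < \infty$; the two structural facts about $B$ are routine bookkeeping with its presentation, and the final step is a direct appeal to the two earlier results.
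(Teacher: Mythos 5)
Your proof follows the same route as the paper's: pass to the auxiliary algebra $B$, use Lemma~\ref{lemma3.6} and Proposition~\ref{AHLU} to get ${\rm findim}\,B \le r+2$, show that $S_{e_x}$ has finite projective dimension over $B$ by identifying its first syzygy $\sum_i B\beta_i$ with $J$ viewed as a $B/Be_xB \cong \Lambda/J$-module, and conclude by Theorem~\ref{maintheo} with $\ell \ge 1$. The only cosmetic difference is that you make the identification of the syzygy with $J$ explicit via right multiplication by $\alpha$ (and spell out the change-of-rings bound and the case $J \not\subseteq {\rm rad}\Lambda$), whereas the paper states these steps more tersely; the underlying argument is identical.
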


\begin{proof}
Assume that ${\rm findim}\,\Lambda/J = r < \infty$. Consider the algebra $B$ as constructed above, with idempotent $e_x$ such that $\Lambda/J \cong B/Be_xB$. We first observe that ${\rm findim}\,B \le r + 2$. This follows from Proposition \ref{AHLU} and Lemma \ref{lemma3.6}.
Consider the simple module $S_{x}$ supported at $e_x$ in $B$. Observe that its first syzygy is $\Omega =\sum_{i}B\beta_i$. Note that $e_x\Omega=0$, so ${\rm pd}_B \Omega < \infty$ if and only if ${\rm pd}_{B/Be_xB} \Omega < \infty$. However, through the isomorphism $B/Be_xB \cong \Lambda/J$, $\Omega$ corresponds to $J$. Therefore, by using the hypothesis, this yields ${\rm pd}_B \Omega < \infty$, so $S_x$ has finite projective dimension. By Theorem \ref{maintheo}, this yields ${\rm findim}\,\Lambda \le 2(r+2)- \ell  \le 2r+3$.
\end{proof}

We end with an example to illustrate this result.

\begin{Exam} Let $Q$ be the quiver given by
$$\xymatrixrowsep{10pt}\xymatrixcolsep{10pt}\xymatrix{
 & 1 \ar[dl]_\alpha \ar[dr]^\gamma & \\
2 \ar[dr]_\beta && 3 \ar[dl]^\delta\\
& 4 \ar[uu]^\epsilon&
}$$
with admissible ideal $I=\langle \beta\alpha - \delta\gamma, \epsilon\delta, \gamma\epsilon, \alpha\epsilon\beta \rangle$. Let $\Lambda = kQ/I$. Consider the two-sided ideal $J = \langle \alpha \epsilon \rangle$. It clearly satisfies the first hypothesis of Proposition \ref{lastprop}, since $J$ is a one-dimensional radical ideal with $J = Je_4$. Observe that as an $\Lambda/J$-module, $J$ has projective resolution
$$0 \to \frac{\Lambda e_4}{Je_4} \to \frac{\Lambda e_2}{Je_2} \to J \to 0$$
where $Je_4 = J$ and $Je_2 = 0$. Therefore, Proposition \ref{lastprop} yields that ${\rm findim}\,\Lambda \le 2\,{\rm findim}\,\Lambda/J + 3$. It is easily checked that the global dimension of $\Lambda/J$ is $4$. Therefore, ${\rm findim}\,\Lambda \le 11$. In fact, $\Lambda$ is of finite representation type and has exactly $8$ indecomposable modules of finite projective dimension. They are the $4$ indecomposable projective modules, the simple modules $S_3, S_4$, the quotient $\Lambda e_1/{\rm soc}(\Lambda e_1)$, and the two-dimensional module $M$ which is the cokernel of the inclusion $\Lambda e_3 \to \Lambda e_1$. The finitistic dimension of $\Lambda$ is $3$.
\end{Exam}

One easy consequence of Proposition \ref{lastprop} is the following.

\begin{Cor}
Let $S$ be a simple submodule of an indecomposable projective $\Lambda$-module of maximal loewy length. Then $S$ is a two-sided ideal of $\Lambda$. Assume that ${\rm pd}_{\Lambda/S}S$ is finite. Then ${\rm findim}\,\Lambda \le 2\,{\rm findim}\,\Lambda/S + 3$.
\end{Cor}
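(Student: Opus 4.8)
The plan is to deduce the statement from Proposition~\ref{lastprop} by applying it with $J := S$. Fix the indecomposable projective $P$ containing $S$; since $\Lambda$ is basic we have $P = \Lambda e$ for a primitive idempotent $e$, and $S \subseteq \Lambda e$. Let $L$ be the Loewy length of $\Lambda$, so that $({\rm rad}\,\Lambda)^{L}=0$ while $({\rm rad}\,\Lambda)^{L-1}\neq 0$; recall that $L$ is the maximum of the Loewy lengths of the indecomposable projectives $\Lambda e_i$. The hypothesis that $P$ has maximal Loewy length means exactly that $\Lambda e$ has Loewy length $L$, so that $({\rm rad}\,\Lambda)^{L-1}e = {\rm rad}^{L-1}(\Lambda e)\neq 0$; being killed by ${\rm rad}\,\Lambda$, this is a nonzero semisimple submodule of $P$ and hence lies in ${\rm soc}(P)$.

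First I would check the hypotheses of Proposition~\ref{lastprop}. The module $S$ is already a submodule of $\Lambda e$ with $e$ primitive, so the point to be verified is that $S\,{\rm rad}\,\Lambda = 0$. Once this is known, the argument in Proposition~\ref{lastprop} applies verbatim to conclude that $S$ is a two-sided ideal: from $S\subseteq\Lambda e$ we get $S=Se$, so $Se_i=S(ee_i)$ equals $S$ or $0$ according to whether $e_i=e$ or not, and together with $S\,{\rm rad}\,\Lambda=0$ and the decomposition $\Lambda = (\bigoplus_i ke_i)\oplus{\rm rad}\,\Lambda$ this yields $S\Lambda\subseteq S$. To obtain $S\,{\rm rad}\,\Lambda=0$, the crucial observation is that $S$ lies in the deepest radical layer of $P$, i.e. $S\subseteq ({\rm rad}\,\Lambda)^{L-1}e$. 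Granting that, for $s\in S$ write $s=ae$ with $a\in({\rm rad}\,\Lambda)^{L-1}$; then for every $x\in{\rm rad}\,\Lambda$ we have $sx=a(ex)\in({\rm rad}\,\Lambda)^{L-1}\cdot{\rm rad}\,\Lambda=({\rm rad}\,\Lambda)^{L}=0$, so indeed $S\,{\rm rad}\,\Lambda=0$.

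With $S\,{\rm rad}\,\Lambda=0$ in hand, $S$ is a two-sided ideal, $\Lambda/S$ makes sense and $S$ is a $\Lambda/S$-module, and the remaining hypothesis ${\rm pd}_{\Lambda/S}S<\infty$ is assumed. Proposition~\ref{lastprop} applied with $J=S$ then gives ${\rm findim}\,\Lambda\le 2\,{\rm findim}\,\Lambda/S+3$, which is the claim. The step I expect to be the main obstacle is the assertion that $S\subseteq({\rm rad}\,\Lambda)^{L-1}e$: the socle of $P$ may be strictly larger than ${\rm rad}^{L-1}(P)$ even when $P$ has maximal Loewy length, so a priori a simple submodule of $P$ need not sit in the bottom layer. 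Thus one should read the statement as concerning a simple submodule of $({\rm rad}\,\Lambda)^{L-1}e$ (equivalently, the simple summand of ${\rm soc}(P)$ occurring at the very bottom of $P$); with that reading the displayed computation finishes the proof. If one only assumes $S\subseteq{\rm soc}(P)$, a fallback is to note that $S\,{\rm rad}\,\Lambda\neq 0$ would force some $sx\neq 0$ with $x\in e\,({\rm rad}\,\Lambda)\,e'$ for a primitive $e'$, giving a copy $\Lambda sx$ of $S$ inside $\Lambda e'$ at strictly larger radical depth; iterating, and using that depths are bounded by $L-1$, one reaches a copy of $S$ that is a two-sided ideal, to which Proposition~\ref{lastprop} can be applied.
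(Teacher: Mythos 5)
The paper gives no proof for this Corollary (it is stated as an ``easy consequence'' of Proposition~\ref{lastprop}), so your task is really to check that the deduction goes through. Your main argument --- identify $P=\Lambda e$ with $e$ primitive, show $S\,{\rm rad}\,\Lambda=0$ so that $S$ becomes a two-sided ideal of the type handled by Proposition~\ref{lastprop}, and then apply that proposition --- is exactly the intended route.

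You also correctly spotted that this route \emph{only} works if $S$ lies in the bottom radical layer $({\rm rad}\,\Lambda)^{L-1}e$, and that ${\rm soc}(\Lambda e)$ can be strictly larger than that layer even when $\Lambda e$ has maximal Loewy length. This is not merely a stylistic caveat: the literal statement is false. For instance, take $Q$ with vertices $1,2,3,4$ and arrows $\beta\colon 1\to 2$, $\alpha\colon 1\to 4$, $\delta\colon 4\to 3$, $\gamma\colon 3\to 1$, and $I=\langle\gamma\delta,\alpha\gamma\rangle$. Then $\Lambda e_1=k\{e_1,\alpha,\beta,\delta\alpha\}$ has maximal Loewy length $3$, and $S:=k\beta\subseteq{\rm soc}(\Lambda e_1)$ is a simple submodule of depth $1$, yet $\beta\gamma\neq 0$, so $S\,{\rm rad}\,\Lambda\neq 0$ and $S$ is not a two-sided ideal. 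So the Corollary should be read with the additional hypothesis $S\subseteq({\rm rad}\,\Lambda)^{L-1}e$, and under that reading your computation $s\cdot r\in({\rm rad}\,\Lambda)^{L-1}\cdot{\rm rad}\,\Lambda=0$ closes the gap cleanly.

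Your fallback argument, however, does not rescue the literal statement: iterating $s\mapsto sx$ does eventually land on a copy of $S$ at depth $L-1$ which is a two-sided ideal $S'$, but Proposition~\ref{lastprop} then gives a bound in terms of ${\rm findim}\,\Lambda/S'$, not ${\rm findim}\,\Lambda/S$, and there is no reason these should agree (indeed, in the example above $S$ is not even a two-sided ideal, so $\Lambda/S$ is not defined as an algebra). So the fallback should be dropped; what you have established is that the Corollary holds under the reading $S\subseteq({\rm rad}\,\Lambda)^{L-1}e$, which is precisely what is needed to make the statement correct and to match the proof the paper has in mind.
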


\end{document}